\newtheorem{theorem}{Theorem}[section]
\newtheorem{lemma}[theorem]{Lemma}
\newtheorem{definition}[theorem]{Definition}
\newtheorem{corollary}[theorem]{Corollary}
\newtheorem{remark}[theorem]{Remark}
\newenvironment{proof}{{\bf Proof:}}{~\hfill $\Box$}
\newenvironment{keywords}{{\bf Keywords: }}{}
\numberwithin{equation}{section}
\begin{document}

\author{Qian Lin $^{1,2}$\footnote{ {\it Email address}:
Qian.Lin@univ-brest.fr}
\\
{\small $^1$School of Mathematics, Shandong University, Jinan
250100,  China;}\\
{\small $^2$ Laboratoire de Math\'ematiques, CNRS UMR 6205,
Universit\'{e} de Bretagne Occidentale,}\\ {\small 6, avenue Victor
Le Gorgeu, CS 93837, 29238 Brest cedex 3, France.}  }

\date{}
 \title{\textbf{Backward doubly stochastic differential
 equations with weak assumptions on the coefficients}}
 \maketitle

\begin{abstract}
In this paper, we deal with one dimensional backward doubly
stochastic differential equations (BDSDEs). We obtain existence
theorems and  comparison theorems for solutions of BDSDEs with weak
assumptions on the coefficients.
\end{abstract}

\vspace{2mm}
 \noindent
\begin{keywords}
backward doubly stochastic differential equations; backward
stochastic differential equations; comparison theorem; existence
theorem.
\end{keywords}

\section{Introduction}

 Pardoux and Peng \cite{PP1990} introduced the following
nonlinear backward stochastic differential equations (BSDEs):
\begin{equation*}
 Y_{t}=\xi+\int_{t}^{T}f(s,Y_{s},Z_{s})ds-\int_{t}^{T}Z_{s}dW_{s},\
 t\in [0,T].
   \end{equation*}
 They obtained the existence and uniqueness of solutions under the Lipschitz condition.
 Since then, the theory  of BSDEs  has been developed by many researchers and
there  are many works  attempting to weaken the Lipschitz condition
in order to obtain the existence and uniqueness results of BDSDEs
(see e.g., Bahlali \cite{B}, Briand and  Confortola \cite{BH2008},
Darling and Pardoux \cite{DP},  El Karoui and Huang \cite{EH},
Hamad\`{e}ne \cite{H}, Jia \cite{J2008}, Kobylanski \cite{K2000},
Lepeltier and San Martin \cite{LM1997} and the references therein).
Today the BSDE has become a powerful tool in the study of partial
differential equations, risk measures, mathematical finance, as well
as stochastic optimal controls and stochastic differential games.

   After the nonlinear BSDEs were introduced, Pardoux and Peng \cite{PP1994} brought forward BDSDEs
   with two different directions of stochastic integrals,
i.e., the equations involve both a standard  stochastic It\^o's
integral  and a backward stochastic It\^o's integral:

   \begin{equation}\label{eq}
 Y_{t}=\xi+\int_{t}^{T}f(s,Y_{s},Z_{s})ds+\int_{t}^{T}g(s,Y_{s},Z_{s})dB_{s}-\int_{t}^{T}Z_{s}dW_{s},\
 t\in [0,T],
   \end{equation}
  the integral with respect to $\{B_{t}\}$ is a backward  It\^o's integral
   and the integral with respect to $\{W_{t}\}$ is a standard forward
   It\^o's integral.
By virtue of this kind of BDSDE, Pardoux and Peng \cite{PP1994}
established the connections between certain quasi-linear stochastic
partial differential equations and BDSDEs, and obtained a
probabilistic representation for a class of quasi-linear stochastic
partial differential equations. They established  the existence and
uniqueness results for solutions of BDSDEs under the Lipschitz
condition on the coefficients. This kind of BDSDEs has a practical
background in finance. The extra noise $B$ can be regarded as some
extra information, which can not be detected in the financial
market, but is available to the particular investors.

Since the work of Pardoux and Peng \cite{PP1994}, there are only
several works attempting to relax the Lipschitz condition to get the
existence and uniqueness results for one dimensional BDSDEs.  Shi et
al. \cite{SGL2005} obtained that one dimensional BDSDE (\ref{eq})
has at least one solution if $f$ is continuous and of linear growth
in $(y,z)$, and $\{f(t,0,0)\}_{t\in[0,T]}$ is bounded. Under the
assumptions that $f$ is bounded, left continuous and non-decreasing
in $y$ and Lipschitz in $z$, Lin \cite{Lin} established an existence
theorem for one dimensional BDSDE (\ref{eq}). Lin \cite{LP2008}
proved that one dimensional BDSDE (\ref{eq}) has at least one
solution if the coefficient $f$ is left Lipschitz and left
continuous in $y$, and Lipschitz in $z$. Lin and Wu \cite{LW2008} obtained
a uniqueness result for one dimensional BDSDE (\ref{eq}) under the
conditions that $f$ is Lipschitz in $y$
  and uniformly continuous in $z$.

 Motivated by the above results, one of the objectives of this paper is to get an existence theorem
   for one dimensional BDSDE (\ref{eq}), which generalizes the result in
   Shi et al. \cite{SGL2005} by the condition of the square integrability of $\{f(t,0,0)\}_{t\in[0,T]}$
   instead of  the boundedness of $\{f(t,0,0)\}_{t\in[0,T]}$.
The other objective of this paper is to generalize the existence
result in Lin \cite{LP2008}. We consider the following BDSDE:
 \begin{equation*}\label{}
 Y_{t}=\xi+\int_{t}^{T}\Big(sgn(Y_{s}) Y^{2}_{s}+\sqrt{Z_{s}1_{\{Z_{s}\geq 0\}}}\Big)ds
 +\int_{t}^{T}g(s,Y_{s},Z_{s})dB_{s}-\int_{t}^{T}Z_{s}dW_{s},\
 t\in [0,T].
 \end{equation*}
Since $\sqrt{z1_{\{z\geq 0\}}}$ is not Lipschitz in $z$, then we can
not apply the existence result in Lin \cite{LP2008} to get the
existence theorem of the above BDSDE.
  We shall investigate an existence result for one dimensional BDSDE
   (\ref{eq}) where  $f$ is left Lipschitz and left continuous in $y$  and
uniformly continuous in $z$, which improves the result in  Lin
\cite{LP2008}. Since $f$  is uniformly continuous in $z$, then we
can not apply comparison theorems for solutions of BDSDEs in
\cite{SGL2005} and \cite{LP2008}. In order to get the existence
theorem for solutions of BDSDEs we shall first establish a
comparison theorem for solutions of BDSDEs when $f$ is Lipschitz in
$y$ and uniformly continuous in $z$, which plays an important role.

  This paper is organized as follows: In section 2, we give some preliminaries and notations, which
  will be useful in what follows. In section
  3, we obtain an existence theorem for the solutions of
BDSDEs with continuous coefficients. In section 4, we establish an
existence theorem and a comparison theorem for the solutions of a
class of BDSDEs with discontinuous coefficients.

\section{Preliminaries and Notations}

 Let $T>0$ be a fixed terminal time and $(\Omega,{\cal F},\mathbb{P})$ be a probability
 space. Let $\{ W_t\}_{0 \leq t \leq T}$ and $\{ B_t\}_{0 \leq t \leq
 T}$ be two mutually independent standard Brownian motion processes,
 with values in $\mathbb{R}^{d}$ and  $\mathbb{R}^{l}$, respectively, defined on $(\Omega,{\cal
 F},\mathbb{P})$. Let $\cal{N}$ denote the class of $\cal {P}$-null sets of $\cal
 {F}$. Then, we define
 $${\cal F}_t\doteq {\cal F}_{0,t}^W\vee {\cal F}_{t,T}^B,\quad t\in \left[
0,T\right],$$ where for any process $\{\eta_{t}\}$, ${\cal
F}_{s,t}^{\eta} = \sigma \{\eta_{r}-\eta_{s}, s \leq r \leq t\}\vee
\mathcal {N}$. Let us point out that ${\cal F}_{0,t}^W$ is
increasing and  ${\cal F}_{t,T}^B$ is decreasing in $t$, but ${\cal
F}_t$ is neither increasing nor decreasing in $t$.

Let us introduce the following spaces:

 \vspace{2mm}$ \bullet \ \  L^{2}(\Omega,{\cal F}_{T},\mathbb{P})\doteq \Big\{\xi: {\cal F}_{T}-$measurable
random variable such that  $\mathbb{E}[|\xi|^{2}]<\infty \Big\}$.

 \vspace{2mm}
$\bullet \ \ S^{2}(0,T;\mathbb{R})\doteq \Big\{\varphi:$ $\varphi$
is a continuous process  with value in $\mathbb{R}$  such that
$\|\varphi\|^{2}_{S^{2}}=\mathbb{E}[\sup\limits_{0\leq t \leq T}
|\varphi_{t}|^{2}] <\infty$, and  $\varphi_{t}$ is ${\cal
F}_{t}-$measurable, for all $t\in [0,T]$ \Big\}.

\vspace{2mm}
 $\bullet\ \   M^{2}(0,T;\mathbb{R}^{d})\doteq \Big\{\varphi:$ $\varphi$ is a
 jointly measurable  process  with value in $\mathbb{R}^{d}$  such that
$\|\varphi\|^{2}_{M^{2}}=\mathbb{E}[\int_{0}^{T}|\varphi_{t}|^{2}dt]<\infty,$
and  $\varphi_{t}$ is ${\cal F}_{t}-$measurable, for all $t\in
[0,T]$ \Big\}.

\vspace{2mm}
  Let $$g: \Omega \times [0,T]\times \mathbb{R} \times \mathbb{R}^{d}\rightarrow
  \mathbb{R}^{l}.$$

  In this paper, we suppose that $\xi\in L^{2}(\Omega,{\cal F}_{T},\mathbb{P})$ and $g$ always satisfies the following
  assumptions:

\vspace{2mm}
  $(H1)$ (Lipschitz condition): There exist constants $C>0$ and $0<\alpha<1$ such
  that,  for all $(t,y_{i},z_{i})\in [0,T]\times \mathbb{R} \times \mathbb{R}^{d},$  $i=1, 2$
  $$|g(t,y_{1},z_{1})-g(t,y_{2},z_{2})|^{2}\leq C|y_{1}-y_{2}|^{2}+\alpha|z_{1}-z_{2}|^{2}.$$

  $(H2)$ For all $ (y,z)\in  \mathbb{R} \times \mathbb{R}^{d}$,  $g(\cdot,y, z)\in M^{2}(0,T;\mathbb{R}^l).$

\vspace{2mm}
 Let $$f: \Omega \times [0,T]\times \mathbb{R} \times \mathbb{R}^{d}\rightarrow
  \mathbb{R}$$ be such that, for all $ (t,y,z)\in [0,T]\times\mathbb{R} \times \mathbb{R}^{d}$, $f(t,y,
  z)$ is $ {\cal F}_{t}-$measurable. We make the following
  assumptions:

\vspace{2mm}
  $(H3)$ (Lipschitz condition): There exists a constant $C>0$  such
  that,
  for all $ (t,y_{i},z_{i})\in [0,T]\times \mathbb{R} \times \mathbb{R}^{d},$  $i=1, 2,$
  $$|f(t,y_{1},z_{1})-f(t,y_{2},z_{2})|\leq C(|y_{1}-y_{2}|+|z_{1}-z_{2}|).$$

\vspace{2mm}
  $(H4)$ $f (\cdot,0, 0)\in M^{2}(0,T;\mathbb{R}).$

\vspace{2mm}
  $(H5)$ $f(t,y, \cdot)$ is uniformly continuous and uniformly
  with respect to $(\omega, t, y)$, i.e., there exists a continuous,
  sub-additive, non-decreasing function  $\phi: \mathbb{R}^{+}\rightarrow \mathbb{R}^{+}$
   with linear growth and satisfying $\phi(0)=0$ such that
  $$|f(t,y,z_{1})-f(t,y,z_{2})|\leq \phi(|z_{1}-z_{2}|),$$
  for all $ (t,y,z_{i})\in [0,T]\times \mathbb{R} \times \mathbb{R}^{d},$  $i=1, 2.$
 Here we denote the constant of linear growth for $\phi$ by $C$,
 i.e., $$0\leq \phi (|x|) \leq C(1+|x|),$$  for all $ x \in \mathbb{R}.$

\vspace{2mm}
  $(H6)$ $f(t,\cdot,z)$ is left continuous and satisfies left Lipschitz condition in $y$,
  i.e., for all $ (t,y_{i},z)\in
[0,T]\times \mathbb{R} \times \mathbb{R}^{d}$,  $i=1, 2$ and
$y_{1}\geq y_{2},$
  $$f(t,y_{1},z)-f(t,y_{2},z)\geq -C(y_{1}-y_{2}).$$

\vspace{2mm}
  $(H7)$ For all $(t,\omega)\in [0,T]\times \Omega$, $f(\omega, t,\cdot, \cdot)$ is
  continuous.

\vspace{2mm}
  $(H8)$ There exists a positive constant $C$  such that
  $$|f(\omega,t,y,z)|\leq C(1+|y|+|z|),
  \ \ (\omega, t,y,z)\in\Omega\times[0,T]\times\mathbb{R} \times \mathbb{R}^{d}.$$

\vspace{2mm}
 $(H8')$ There exists a constant $C>0$ and a positive  stochastic process $K\in M^{2}(0,T;\mathbb{R})$ such
   that,  for all $(\omega, t,y,z)\in  \Omega \times [0,T] \times \mathbb{R} \times
   \mathbb{R}^{d}$,
  $$|f(\omega,t,y,z)|\leq C(K_{t}(\omega)+|y|+|z|).$$

\begin{remark}\label{}
Crandall \cite{C} first used $(H5)$ to study viscosity solutions of
partial differential equations.
\end{remark}

\begin{remark}\label{}
From $(H5)$ and $(H6)$ we know that, for $(t,y_{i},z_{i})\in
[0,T]\times \mathbb{R} \times \mathbb{R}^{d}$,  $i=1, 2$ and
$y_{1}\geq y_{2},$ we have
  $$f(t,y_{1},z_{1})-f(t,y_{2},z_{2})\geq -C(y_{1}-y_{2})-\phi(|z_{1}-z_{2}|).$$
\end{remark}

 \begin{remark}\label{}
 If we take $\phi(x)=Cx, x\geq 0,$ in $(H5)$, where $C$ is a positive constant,  then
combining   $(H6)$ with some conditions Lin \cite{LP2008} obtained
that one dimensional BDSDE has at least one solution.
\end{remark}

 \begin{remark}\label{}
Under the assumptions $(H7)$ and $(H8)$  Shi et al. \cite{SGL2005}
proved that one dimensional BDSDE has at least one solution.
\end{remark}

\begin{remark}\label{}
It is obvious that $(H8')$ implies  $(H8)$.
\end{remark}

  For $n\in N$, we let
$$\underline{f}_{n}(t,y,z)=\inf\limits_{u\in \mathbb{R},v\in \mathbb{R}^{d}}\Big\{f(t,u,v)+n(|y-u|+|z-v|)\Big\}$$
and
$$\overline{f}_{n}(t,y,z)=\sup\limits_{u\in \mathbb{R},v\in \mathbb{R}^{d}}\Big\{f(t,u,v)-n(|y-u|+|z-v|)\Big\}.$$
Then, we have the following lemma, which was established by
Lepeltier and San Martin \cite{{LM1997}}.

 \begin{lemma}\label{3}
  If $f$ satisfies $(H7)$ and $(H8)$, then, for
$n>C$ and $(t,y,z)\in  [0,T] \times \mathbb{R} \times
\mathbb{R}^{d}$, we have

(i) $-C(|y|+|z|+1)\leq \underline{f}_{n}(t,y,z)\leq f(t,y,z) \leq
\overline{f}_{n}(t,y,z)\leq C(|y|+|z|+1)$.

(ii) $ \underline{f}_{n}(t,y,z)$ is non-decreasing in $n$ and  $
\overline{f}_{n}(t,y,z)$ is non-increasing in $n$.

(iii) For all $ (\omega, t,y_{i},z_{i})\in \Omega\times[0,T]\times
\mathbb{R} \times \mathbb{R}^{d},$  $i=1, 2,$ we have
  $$|\underline{f}_{n}(\omega,t,y_{1},z_{1})-\underline{f}_{n}(\omega,t,y_{2},z_{2})|\leq n(|y_{1}-y_{2}|+|z_{1}-z_{2}|).$$
The same holds for $\overline{f}_{n}$.

(iv) If $(y_{n}, z_{n})\rightarrow (y,z)$, as $n\rightarrow\infty$,
then $ \underline{f}_{n}(t,y_{n},z_{n})\rightarrow f(t,y,z)$,  as
$n\rightarrow\infty$. The same holds for $\overline{f}_{n}$.
\end{lemma}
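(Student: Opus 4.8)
The plan is to verify the four assertions essentially by direct manipulation of the infimal/supremal convolution formulas, treating only $\underline f_n$ since the statements for $\overline f_n$ follow by applying the results to $-f$. First I would check that $\underline f_n$ is finite and satisfies (i): for the upper bound $\underline f_n(t,y,z)\le f(t,y,z)$ take $(u,v)=(y,z)$ in the infimum; for the lower bound, use $(H8)$ to write, for any $(u,v)$,
\begin{equation*}
f(t,u,v)+n(|y-u|+|z-v|)\ge -C(1+|u|+|v|)+n(|y-u|+|z-v|),
\end{equation*}
and then, since $|u|\le |y|+|y-u|$ and $|v|\le |z|+|z-v|$, the right side is bounded below by $-C(1+|y|+|z|)+(n-C)(|y-u|+|z-v|)\ge -C(1+|y|+|z|)$ because $n>C$; taking the infimum over $(u,v)$ gives the claimed bound, and in particular $\underline f_n>-\infty$.

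Next, (ii) is immediate: increasing $n$ can only increase each term $f(t,u,v)+n(|y-u|+|z-v|)$, hence increases the infimum. For (iii), the Lipschitz estimate, I would use the standard argument that an infimum of a family of functions each $n$-Lipschitz in $(y,z)$ is itself $n$-Lipschitz: given $(y_1,z_1)$ and $(y_2,z_2)$ and $\varepsilon>0$, pick $(u,v)$ nearly attaining the infimum defining $\underline f_n(t,y_2,z_2)$, and estimate $\underline f_n(t,y_1,z_1)\le f(t,u,v)+n(|y_1-u|+|z_1-v|)\le \underline f_n(t,y_2,z_2)+\varepsilon + n(|y_1-y_2|+|z_1-z_2|)$ using the triangle inequality; let $\varepsilon\to 0$ and swap the roles of the two points.

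The main obstacle is (iv), the convergence. I would proceed as follows. Fix $(t,y,z)$ and suppose $(y_n,z_n)\to(y,z)$. For each $n>C$ choose $(u_n,v_n)$ with
\begin{equation*}
f(t,u_n,v_n)+n(|y_n-u_n|+|z_n-v_n|)\le \underline f_n(t,y_n,z_n)+\tfrac1n.
\end{equation*}
From (i), $\underline f_n(t,y_n,z_n)\le f(t,y_n,z_n)$, which is bounded since $(y_n,z_n)$ is bounded and $f$ satisfies $(H8)$; combining this with the lower bound in (i) applied at $(y_n,z_n)$, the term $n(|y_n-u_n|+|z_n-v_n|)$ stays bounded, so $|y_n-u_n|+|z_n-v_n|=O(1/n)\to 0$, whence $(u_n,v_n)\to(y,z)$. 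Then by continuity of $f$ ($(H7)$), $f(t,u_n,v_n)\to f(t,y,z)$, and since $n(|y_n-u_n|+|z_n-v_n|)\ge 0$ we get $\limsup_n \underline f_n(t,y_n,z_n)\le \lim_n f(t,y_n,z_n)=f(t,y,z)$ from the upper bound, while $\liminf_n \underline f_n(t,y_n,z_n)\ge \liminf_n\big(f(t,u_n,v_n)-\tfrac1n\big)=f(t,y,z)$ from the defining inequality; hence $\underline f_n(t,y_n,z_n)\to f(t,y,z)$. The only subtlety to be careful about is ensuring the bracketed term does not blow up, which is exactly where $(H8)$ and $n>C$ are used; everything else is routine, and the statements for $\overline f_n$ follow by replacing $f$ with $-f$.
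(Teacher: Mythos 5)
Your proof is correct; note that the paper itself gives no proof of this lemma, simply citing Lepeltier and San Martin \cite{LM1997}, and your argument is exactly the standard inf-convolution proof from that reference (take $(u,v)=(y,z)$ for the upper bound, use $(H8)$ with the triangle inequality and $n>C$ for the lower bound, Lipschitz stability of infima for (iii), and near-minimizers $(u_n,v_n)$ with $|y_n-u_n|+|z_n-v_n|=O(1/n)$ together with $(H7)$ for (iv)). The reduction of the $\overline{f}_n$ statements to those for $\underline{f}_n$ via $\overline{f}_n=-\underline{(-f)}_n$ is also sound, so nothing is missing.
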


  Given $\xi\in L^{2}(\Omega,{\cal F}_{T},\mathbb{P})$, we consider the
  following BDSDE with data $(f,g,T,\xi):$
  \begin{equation}\label{1}
 Y_{t}=\xi+\int_{t}^{T}f(s,Y_{s},Z_{s})ds+\int_{t}^{T}g(s,Y_{s},Z_{s})dB_{s}-\int_{t}^{T}Z_{s}dW_{s}.
   \end{equation}

\begin{definition} A pair of processes $(Y,Z)\in \mathbb{R} \times
\mathbb{R}^{d}$ is called a solution of BDSDE (\ref{1}), if
$(Y,Z)\in S^{2}(0,T;\mathbb{R})\times M^{2}(0,T;\mathbb{R}^{d})$ and
satisfies BDSDE (\ref{1}).
\end{definition}

Pardoux and Peng \cite{PP1994} established the following existence
and uniqueness for solutions of BDSDE (\ref{1}).
\begin{lemma} \label{l4}
Under the assumptions
 $(H1)-(H4)$, BDSDE (\ref{1}) has a unique solution $(Y,Z)\in
S^{2}(0,T;\mathbb{R})\times M^{2}(0,T;\mathbb{R}^{d})$.
\end{lemma}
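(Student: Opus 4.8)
The plan is to prove this classical result (the argument is essentially that of \cite{PP1994}) by a Banach fixed-point argument on $M^{2}(0,T;\mathbb{R})\times M^{2}(0,T;\mathbb{R}^{d})$, using a norm weighted by $e^{\beta s}$ for a large parameter $\beta>0$ to be chosen. There are two steps: first I would solve the BDSDE in the special case where the coefficients do not depend on the unknown $(Y,Z)$; then I would obtain the solution of (\ref{1}) as the unique fixed point of the map that freezes the coefficients along a given process.

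\textbf{Step 1 (frozen coefficients).} Given $\varphi\in M^{2}(0,T;\mathbb{R})$ and $\psi\in M^{2}(0,T;\mathbb{R}^{l})$ that do not depend on $(Y,Z)$, the BDSDE $Y_{t}=\xi+\int_{t}^{T}\varphi_{s}\,ds+\int_{t}^{T}\psi_{s}\,dB_{s}-\int_{t}^{T}Z_{s}\,dW_{s}$ has a unique solution in $S^{2}(0,T;\mathbb{R})\times M^{2}(0,T;\mathbb{R}^{d})$. One sets $Y_{t}=\mathbb{E}\big[\xi+\int_{t}^{T}\varphi_{s}\,ds+\int_{t}^{T}\psi_{s}\,dB_{s}\mid{\cal F}_{t}\big]$; since $\{{\cal F}_{t}\}$ is neither increasing nor decreasing one cannot run a martingale representation directly, so I would fix $t$ and work on $[t,T]$ with the \emph{genuine} forward filtration $\{{\cal F}_{0,s}^{W}\vee{\cal F}_{t,T}^{B}\}_{s\in[t,T]}$, with respect to which $(W_{s}-W_{t})_{s\geq t}$ remains a Brownian motion because $W$ and $B$ are independent. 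Representing the martingale $\bar{Y}_{s}=\mathbb{E}[\xi+\int_{t}^{T}\varphi_{r}\,dr+\int_{t}^{T}\psi_{r}\,dB_{r}\mid{\cal F}_{0,s}^{W}\vee{\cal F}_{t,T}^{B}]$ as $\bar{Y}_{s}=\bar{Y}_{t}+\int_{t}^{s}Z_{r}^{t}\,dW_{r}$, evaluating at $s=t$ identifies $\bar{Y}_{t}$ with $Y_{t}$ and at $s=T$ gives the equation, and a consistency check lets the $Z^{t}$ be glued into a single $Z\in M^{2}(0,T;\mathbb{R}^{d})$; square-integrability and the $S^{2}$-estimate follow from Doob's and the Burkholder--Davis--Gundy inequalities. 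I expect this step, forced on us by the non-monotone family $\{{\cal F}_{t}\}$, to be the real obstacle.

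\textbf{Step 2 (contraction).} For $(U,V)\in M^{2}\times M^{2}$, assumptions $(H1)$--$(H4)$ yield $f(\cdot,U,V)\in M^{2}(0,T;\mathbb{R})$ and $g(\cdot,U,V)\in M^{2}(0,T;\mathbb{R}^{l})$, so Step 1 defines a map $\Phi(U,V)=(Y,Z)$ from $M^{2}\times M^{2}$ into $S^{2}\times M^{2}$. For two inputs put $\hat{U}=U^{1}-U^{2}$, $\hat{V}=V^{1}-V^{2}$, $\hat{Y}=Y^{1}-Y^{2}$, $\hat{Z}=Z^{1}-Z^{2}$, and apply It\^o's formula for BDSDEs to $e^{\beta s}|\hat{Y}_{s}|^{2}$; because of the backward integral this produces the \emph{extra} term $+\int_{t}^{T}e^{\beta s}\big|g(s,U_{s}^{1},V_{s}^{1})-g(s,U_{s}^{2},V_{s}^{2})\big|^{2}\,ds$ on the right-hand side. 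Taking expectations (the $dW$- and $dB$-integrals having zero mean, after localization if needed) and using $(H1)$, $(H3)$ together with $2ab\leq\frac{\beta}{2}a^{2}+\frac{2}{\beta}b^{2}$, one arrives at
$$\frac{\beta}{2}\mathbb{E}\!\int_{0}^{T}\!e^{\beta s}|\hat{Y}_{s}|^{2}ds+\mathbb{E}\!\int_{0}^{T}\!e^{\beta s}|\hat{Z}_{s}|^{2}ds\;\leq\;\Big(\tfrac{4C^{2}}{\beta}+C\Big)\mathbb{E}\!\int_{0}^{T}\!e^{\beta s}|\hat{U}_{s}|^{2}ds+\Big(\tfrac{4C^{2}}{\beta}+\alpha\Big)\mathbb{E}\!\int_{0}^{T}\!e^{\beta s}|\hat{V}_{s}|^{2}ds.$$
Since $\alpha<1$, taking $\beta$ large enough makes $\tfrac{4C^{2}}{\beta}+\alpha<1$ and $\tfrac{2}{\beta}\big(\tfrac{4C^{2}}{\beta}+C\big)<1$ simultaneously, so $\Phi$ is a strict contraction for the (equivalent) norm $\|(Y,Z)\|_{\beta}^{2}=\mathbb{E}\int_{0}^{T}e^{\beta s}\big(\tfrac{\beta}{2}|Y_{s}|^{2}+|Z_{s}|^{2}\big)\,ds$ on the complete space $M^{2}\times M^{2}$ — this is exactly where the hypothesis $0<\alpha<1$ in $(H1)$ is used. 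By Banach's theorem $\Phi$ has a unique fixed point, which lies in $S^{2}\times M^{2}$ since $\Phi$ maps into that set; and $(Y,Z)$ solves (\ref{1}) if and only if it is a fixed point of $\Phi$, so this gives simultaneously existence and uniqueness.

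In short, the It\^o computation and the calibration of $\beta$ are routine; what needs genuine care is Step 1, i.e.\ the martingale representation and the derivation of the equation for the non-monotone family $\{{\cal F}_{t}\}$, together with keeping the correct sign of the $\psi$-term (namely $+\int|\psi|^{2}$, not $-\int|\psi|^{2}$) in the It\^o formula for BDSDEs — the very feature that forces the restriction $\alpha<1$ and that has no counterpart in the theory of ordinary BSDEs.
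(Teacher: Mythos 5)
Your argument is correct and is essentially the original Pardoux--Peng proof: the paper itself gives no proof of this lemma but simply cites \cite{PP1994}, whose argument is exactly your two-step scheme (martingale representation with respect to the enlarged forward filtration ${\cal F}_{0,s}^{W}\vee{\cal F}_{t,T}^{B}$ for the frozen-coefficient equation, then a contraction in a $\beta$-weighted norm, with the $+\int|g|^{2}ds$ term from the backward integral being the reason $\alpha<1$ is needed). Your It\^o computation and the calibration of $\beta$ check out, so there is nothing to add.
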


   Finally, we make another assumption, which will be needed in what
   follows.

\vspace{2mm}
   $(H9)$ There exist two BDSDEs with data $(f_{i},g,T,\xi)$ which
   have at least one solution $(Y^{i},Z^{i})$, $i=1,2$,
   respectively. For all $(t,y,z)\in [0,T]\times \mathbb{R}\times
   \mathbb{R}^{d}$,
   $$f_{1}(t,y,z)\leq f(t,y,z)\leq f_{2}(t,y,z), \quad Y_{t}^{1}\leq Y_{t}^{2},
   a.s..$$
   Moreover, the processes $\Big\{f_{i}(t,Y^{i}_{t},Z^{i}_{t})\Big\}_{t\in[0,T]}$, $i=1,2,$ are
   square integrable.

\section{Existence theorem for BDSDEs
  with general continuous coefficients}
The objective of this section is to obtain an existence theorem for
BDSDEs, which generalizes the corresponding result of Shi et al.
\cite{SGL2005}.

We  first give the following useful lemma. For its proof the reader
is referred to \cite{BHM} and \cite{LM1997}.
 \begin{lemma}\label{l2}
 Let $\underline{f}_{n}$ and $\overline{f}_{n}$ be introduced in Section 2.
 If $f$ satisfies $(H7)$ and $(H8')$, then, for
$n>C$ and  $(t,y,z)\in  [0,T] \times \mathbb{R} \times
\mathbb{R}^{d}$, we have

(i) $-C(|y|+|z|+K_{t})\leq \underline{f}_{n}(t,y,z)\leq f(t,y,z)
\leq \overline{f}_{n}(t,y,z)\leq C(|y|+|z|+K_{t})$.

(ii) $ \underline{f}_{n}(t,y,z)$ is non-decreasing in $n$ and  $
\overline{f}_{n}(t,y,z)$ is non-increasing in $n$.

(iii) For all $ (\omega, t,y_{i},z_{i})\in \Omega\times[0,T]\times
\mathbb{R} \times \mathbb{R}^{d},$  $i=1, 2,$ we have
  $$|\underline{f}_{n}(\omega,t,y_{1},z_{1})-\underline{f}_{n}(\omega,t,y_{2},z_{2})|\leq n(|y_{1}-y_{2}|+|z_{1}-z_{2}|).$$
The same holds for $\overline{f}_{n}$.

(iv) If $(y_{n}, z_{n})\rightarrow (y,z)$, as $n\rightarrow\infty$,
then $ \underline{f}_{n}(t,y_{n},z_{n})\rightarrow f(t,y,z)$,  as
$n\rightarrow\infty$. The same holds for $\overline{f}_{n}$.

\end{lemma}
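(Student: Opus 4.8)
\medskip
\noindent\textbf{Proof (plan).}
The plan is to repeat, almost verbatim, the classical inf- and sup-convolution argument underlying Lemma \ref{3}, the only change being that the process $K_t$ from $(H8')$ now plays the role that the constant $1$ played under $(H8)$. The key observation is that in forming $\underline{f}_n(t,y,z)$ and $\overline{f}_n(t,y,z)$ the value $K_t(\omega)$ is a frozen parameter: it does not depend on $(y,z)$, on the dummy variables $u,v$, or on $n$. Hence every estimate in the proof of Lemma \ref{3} goes through with $1$ replaced by $K_t$. First I would dispatch parts (ii) and (iii), which are purely formal and use no growth hypothesis at all. For (ii): $\underline{f}_n$ is an infimum of a family of functions which is pointwise non-decreasing in $n$, hence $\underline{f}_n$ is non-decreasing in $n$; dually $\overline{f}_n$ is non-increasing in $n$. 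For (iii): for any pair $(u,v)$ the triangle inequality gives $f(t,u,v)+n(|y_1-u|+|z_1-v|)\le f(t,u,v)+n(|y_2-u|+|z_2-v|)+n(|y_1-y_2|+|z_1-z_2|)$; taking the infimum over $(u,v)$ and then invoking symmetry yields $|\underline{f}_n(t,y_1,z_1)-\underline{f}_n(t,y_2,z_2)|\le n(|y_1-y_2|+|z_1-z_2|)$, and the same for $\overline{f}_n$.

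For part (i), the middle inequalities $\underline{f}_n(t,y,z)\le f(t,y,z)\le\overline{f}_n(t,y,z)$ follow by taking $u=y$, $v=z$ in the infimum and the supremum. For the outer bounds I would combine $(H8')$ with $|u|\le|y|+|y-u|$ and $|v|\le|z|+|z-v|$ to obtain, for every $(u,v)$,
$$f(t,u,v)+n(|y-u|+|z-v|)\ \ge\ -C(K_t+|y|+|z|)+(n-C)(|y-u|+|z-v|);$$
since $n>C$ the last summand is nonnegative, so taking the infimum over $(u,v)$ gives $\underline{f}_n(t,y,z)\ge -C(K_t+|y|+|z|)$ and, in particular, $\underline{f}_n>-\infty$. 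The bound $\overline{f}_n(t,y,z)\le C(K_t+|y|+|z|)$ is obtained symmetrically from $f(t,u,v)\le C(K_t+|u|+|v|)$.

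For part (iv), fix $\omega$, fix $t$, and let $(y_n,z_n)\to(y,z)$; I treat $\underline{f}_n$, the case of $\overline{f}_n$ being the mirror image. Since $\underline{f}_n(t,y_n,z_n)$ is finite by (i), for each $n$ we may choose $(u_n,v_n)$ with $f(t,u_n,v_n)+n(|y_n-u_n|+|z_n-v_n|)\le\underline{f}_n(t,y_n,z_n)+1/n$. Using $\underline{f}_n(t,y_n,z_n)\le f(t,y_n,z_n)$ together with the lower bound $-f(t,u_n,v_n)\le C(K_t+|u_n|+|v_n|)\le C(K_t+|y_n|+|z_n|)+C(|y_n-u_n|+|z_n-v_n|)$ coming from $(H8')$, one gets $(n-C)(|y_n-u_n|+|z_n-v_n|)\le f(t,y_n,z_n)+1/n+C(K_t+|y_n|+|z_n|)$. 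The right-hand side is bounded in $n$, since $(y_n,z_n)$ converges and $f(t,\cdot,\cdot)$ is continuous by $(H7)$; hence $|y_n-u_n|+|z_n-v_n|=O(1/n)\to 0$, so $(u_n,v_n)\to(y,z)$. Finally $f(t,u_n,v_n)-1/n\le\underline{f}_n(t,y_n,z_n)\le f(t,y_n,z_n)$, and as $n\to\infty$ both outer terms converge to $f(t,y,z)$ by continuity of $f$, whence $\underline{f}_n(t,y_n,z_n)\to f(t,y,z)$.

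I do not expect a genuine obstacle: the lemma is essentially bookkeeping. The one point that needs a moment's care is that $K_t$ is a square-integrable, $\mathcal{F}_t$-measurable \emph{process} rather than a constant, but this causes no trouble in the proof, since $K_t$ is never acted on by the operations defining $\underline{f}_n$ and $\overline{f}_n$; its measurability and integrability matter only afterwards, to guarantee that the truncated coefficients $\underline{f}_n,\overline{f}_n$ satisfy $(H3)$--$(H4)$-type conditions, so that Lemma \ref{l4} can be applied to them.
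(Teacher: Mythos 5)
Your proof is correct and is exactly the standard Lepeltier--San Martin inf/sup-convolution argument that the paper itself invokes for this lemma (it gives no proof, deferring to \cite{BHM} and \cite{LM1997}), with the constant $1$ replaced throughout by the frozen parameter $K_{t}(\omega)$ as you observe. All four parts, including the near-minimizer argument for (iv), are carried out as in those references, so no further comment is needed.
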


We also need the following comparison theorem  obtained  in Lin
\cite{LP2008}.
\begin{lemma}\label{2}
Assume BDSDEs (\ref{1}) with data $(f^{1},g,T,\xi^{1})$ and
$(f^{2},g,T,\xi^{2})$ have solutions $(y^{1},z^{1})$ and
$(y^{2},z^{2})$, respectively. If $f^{1}$ satisfies $(H3)$ and
$(H4)$, $\xi^{1}\leq \xi^{2}$, $a.s.$, $f^{1}(t, y_{t}^{2},
z_{t}^{2})\leq f^{2}(t, y_{t}^{2}, z_{t}^{2})$, $d\mathbb{P}dt-a.s.$
(resp. $f^{2}$ satisfies $(H3)$ and $(H4)$, $f^{1}(t, y_{t}^{1},
z_{t}^{1}) \leq f^{2}(t,y_{t}^{1}, z_{t}^{1})$, $d\mathbb{P}dt-a.s.$
), then we have $y_{t}^{1} \leq y_{t}^{2}$, $ a.s.$,  for all $  \ t
\in [0,T]$.
\end{lemma}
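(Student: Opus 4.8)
The plan is to reduce the comparison to the classical linearization argument for BDSDEs, following Pardoux--Peng \cite{PP1994} and El Karoui--Peng--Quenez, but carried out with the doubly stochastic integral present. Write $\hat{Y}=y^{1}-y^{2}$, $\hat{Z}=z^{1}-z^{2}$ and $\hat{\xi}=\xi^{1}-\xi^{2}\leq 0$. Subtracting the two BDSDEs gives
\begin{equation*}
\hat{Y}_{t}=\hat{\xi}+\int_{t}^{T}\bigl(f^{1}(s,y^{1}_{s},z^{1}_{s})-f^{2}(s,y^{2}_{s},z^{2}_{s})\bigr)ds+\int_{t}^{T}\bigl(g(s,y^{1}_{s},z^{1}_{s})-g(s,y^{2}_{s},z^{2}_{s})\bigr)dB_{s}-\int_{t}^{T}\hat{Z}_{s}dW_{s}.
\end{equation*}
Assume we are in the case where $f^{1}$ satisfies $(H3)$ and $(H4)$ and $f^{1}(t,y^{2}_{t},z^{2}_{t})\leq f^{2}(t,y^{2}_{t},z^{2}_{t})$; the other case is symmetric. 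Split the driver difference as $\bigl(f^{1}(s,y^{1}_{s},z^{1}_{s})-f^{1}(s,y^{2}_{s},z^{2}_{s})\bigr)+\bigl(f^{1}(s,y^{2}_{s},z^{2}_{s})-f^{2}(s,y^{2}_{s},z^{2}_{s})\bigr)$. The second bracket is $\leq 0$ by hypothesis. For the first bracket, using $(H3)$ we can linearize: there exist bounded adapted processes $a_{s}$ and $b_{s}$ (with $|a_{s}|\leq C$, $|b_{s}|\leq C$) such that $f^{1}(s,y^{1}_{s},z^{1}_{s})-f^{1}(s,y^{2}_{s},z^{2}_{s})=a_{s}\hat{Y}_{s}+b_{s}\cdot\hat{Z}_{s}$; explicitly one takes $a_{s}=\bigl(f^{1}(s,y^{1}_{s},z^{1}_{s})-f^{1}(s,y^{2}_{s},z^{1}_{s})\bigr)/\hat{Y}_{s}$ on $\{\hat{Y}_{s}\neq 0\}$ and $0$ otherwise, and similarly for $b_{s}$ componentwise.

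Next I would apply It\^o's formula to $\hat{Y}_{t}^{2}\exp\bigl(\int_{0}^{t}\beta_{s}ds\bigr)$, or more directly estimate $\mathbb{E}\bigl[(\hat{Y}_{t}^{+})^{2}\bigr]$, keeping careful track of the doubly stochastic term. Using $(H1)$, the bracket of the backward integral in the It\^o expansion of $(\hat{Y}^{+})^{2}$ contributes a term bounded by $C\mathbb{E}\int_{t}^{T}(\hat{Y}_{s}^{+})^{2}ds+\alpha\mathbb{E}\int_{t}^{T}\mathbf{1}_{\{\hat{Y}_{s}>0\}}|\hat{Z}_{s}|^{2}ds$, while the $b_{s}\cdot\hat{Z}_{s}$ term from the drift is absorbed by Young's inequality into a further $\varepsilon\mathbb{E}\int_{t}^{T}|\hat{Z}_{s}|^{2}\mathbf{1}_{\{\hat{Y}_{s}>0\}}ds$ plus $C_{\varepsilon}\mathbb{E}\int_{t}^{T}(\hat{Y}_{s}^{+})^{2}ds$. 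Choosing $\varepsilon$ so that $\alpha+\varepsilon<1$, the $|\hat{Z}_{s}|^{2}$ terms on the right are strictly dominated by the stochastic-integral term $\mathbb{E}\int_{t}^{T}|\hat{Z}_{s}|^{2}\mathbf{1}_{\{\hat{Y}_{s}>0\}}ds$ on the left, so they cancel favourably. Since $\hat{\xi}^{+}=0$ and the $\leq 0$ driver term only helps, we arrive at $\mathbb{E}\bigl[(\hat{Y}_{t}^{+})^{2}\bigr]\leq C\int_{t}^{T}\mathbb{E}\bigl[(\hat{Y}_{s}^{+})^{2}\bigr]ds$, and Gronwall's lemma forces $\mathbb{E}\bigl[(\hat{Y}_{t}^{+})^{2}\bigr]=0$ for every $t$, i.e. $y^{1}_{t}\leq y^{2}_{t}$ a.s. for all $t\in[0,T]$.

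The main obstacle, and the reason $(H5)$-type hypotheses are not needed here (only $(H3)$), is the bookkeeping of the two It\^o integrals of opposite time-directions: one must verify that the cross term between $dB$ and $dW$ vanishes (by independence of $B$ and $W$ and the adaptedness structure of $\mathcal{F}_{t}=\mathcal{F}^{W}_{0,t}\vee\mathcal{F}^{B}_{t,T}$), that both stochastic integrals are true martingales (using $(H1)$, $(H2)$, $(H4)$ together with $(y^{i},z^{i})\in S^{2}\times M^{2}$, so localization plus a uniform-integrability argument applies), and that the contraction $\alpha<1$ in $(H1)$ is exactly what makes the $|\hat{Z}|^{2}$ estimate close. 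Once the It\^o expansion is set up correctly with these three points checked, the rest is the standard Gronwall argument sketched above.
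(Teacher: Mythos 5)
The paper gives no proof of this lemma: it is quoted verbatim from Lin \cite{LP2008}, so there is no in-paper argument to compare against. Your sketch is the standard comparison proof (take the difference equation, split the driver, apply the generalized It\^o formula of Pardoux--Peng to $(\hat{Y}^{+})^{2}$, absorb the $|\hat{Z}|^{2}$ contributions using $\alpha<1$ from $(H1)$ plus Young's inequality, and conclude by Gronwall), and it is correct in outline; it correctly uses Lipschitzness only of $f^{1}$ and the pointwise inequality only along $(y^{2},z^{2})$, consistent with the asymmetric hypotheses. Two minor points worth tightening: the linearization into $a_{s}\hat{Y}_{s}+b_{s}\cdot\hat{Z}_{s}$ is an unnecessary detour (the inequality $|f^{1}(s,y^{1}_{s},z^{1}_{s})-f^{1}(s,y^{2}_{s},z^{2}_{s})|\leq C(|\hat{Y}_{s}|+|\hat{Z}_{s}|)$ is all you use, and the componentwise definition of $b_{s}$ as stated needs a telescoping interpolation in $z$ to be literally valid), and applying It\^o's formula to $x\mapsto(x^{+})^{2}$, which is $C^{1}$ but not $C^{2}$, requires the usual smoothing approximation, which you should at least mention.
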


We now give the following existence theorem for BDSDEs, which
extends the corresponding result in  Shi et al. \cite{SGL2005} by
eliminating the condition that $\{K_{t}\}_{t\in [0,T]}$ is a bounded
process. The coefficient $g$ in the backward It\^o's integral will
bring the extra estimate difficulty.

\begin{theorem}\label{t1}
Under the assumptions $(H7)$ and $(H8')$, BDSDE  with data $(f, g,
T, \xi)$ has a   minimal (resp. maximal) solution
$(\underline{y},\underline{z})$ (resp.
$(\overline{y},\overline{z})$) of BDSDE with data $(f, g, T, \xi)$,
in the sense that, for any other solution $(y,z)$ of BDSDE with data
$(f, g, T, \xi)$, we have $\underline{y}\leq y$ (resp.
$\overline{y}\geq y$).
\end{theorem}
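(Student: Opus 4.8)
The plan is to realize the minimal solution as the increasing limit of solutions to Lipschitz BDSDEs built from the Lipschitz approximations $\underline{f}_n$ of Lemma \ref{l2}; the maximal solution is obtained symmetrically from $\overline{f}_n$, so I will only describe the minimal case. By Lemma \ref{l2}(iii), each $\underline{f}_n$ is Lipschitz in $(y,z)$ with constant $n$, and by Lemma \ref{l2}(i) it satisfies the linear-growth bound $|\underline{f}_n(t,y,z)|\le C(|y|+|z|+K_t)$; in particular $\underline{f}_n(\cdot,0,0)\in M^2(0,T;\mathbb{R})$ since $K\in M^2(0,T;\mathbb{R})$. Hence for each $n>C$ the BDSDE with data $(\underline{f}_n,g,T,\xi)$ satisfies hypotheses $(H1)$--$(H4)$, so by Lemma \ref{l4} it has a unique solution $(y^n,z^n)\in S^2(0,T;\mathbb{R})\times M^2(0,T;\mathbb{R}^d)$.

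First I would establish monotonicity of $\{y^n\}$ in $n$. Since $\underline{f}_n\le \underline{f}_{n+1}$ pointwise (Lemma \ref{l2}(ii)) and $\underline{f}_n$ is Lipschitz (so in particular satisfies $(H3)$ and $(H4)$), the comparison theorem Lemma \ref{2} applied to the pair $(\underline{f}_n,g,T,\xi)$ and $(\underline{f}_{n+1},g,T,\xi)$ gives $y^n_t\le y^{n+1}_t$ a.s. for all $t$. Likewise, if $(y,z)$ is any solution of the BDSDE with data $(f,g,T,\xi)$, then since $\underline{f}_n(t,y_t,z_t)\le f(t,y_t,z_t)$ $d\mathbb{P}dt$-a.s. (Lemma \ref{l2}(i)) and $\underline{f}_n$ satisfies $(H3)$--$(H4)$, Lemma \ref{2} yields $y^n_t\le y_t$ a.s. Thus $\{y^n\}$ is nondecreasing and bounded above by any solution; in particular it is bounded above.

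Next I would derive uniform $S^2\times M^2$ a priori estimates for $(y^n,z^n)$ that do not depend on $n$. Applying It\^o's formula to $|y^n_t|^2$ on $[t,T]$, using $(H1)$ on $g$ (with its contraction constant $\alpha<1$ to absorb the $\alpha|z^n|^2$ term coming from the backward integral), the linear growth bound $|\underline{f}_n(t,y,z)|\le C(|y|+|z|+K_t)$, the elementary inequality $2ab\le \varepsilon a^2+\varepsilon^{-1}b^2$, $K\in M^2$, $\xi\in L^2$, and Gronwall's lemma, one obtains $\sup_n\big(\mathbb{E}\sup_{t}|y^n_t|^2+\mathbb{E}\int_0^T|z^n_t|^2dt\big)<\infty$ (the Burkholder--Davis--Gundy inequality handles the martingale terms). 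From these bounds, a second application of It\^o to $|y^n-y^m|^2$, together with the monotone (hence a.s. and in $L^2$, by dominated convergence using the uniform bound) convergence $y^n\uparrow y$ for some process $y$, shows $\{z^n\}$ is Cauchy in $M^2(0,T;\mathbb{R}^d)$; let $z$ be its limit. Then $y\in S^2$, $z\in M^2$. Passing to the limit in the BDSDE: by Lemma \ref{l2}(iv), $\underline{f}_n(t,y^n_t,z^n_t)\to f(t,y_t,z_t)$ $d\mathbb{P}dt$-a.s. along a subsequence on which $(y^n,z^n)\to(y,z)$ a.e., and the uniform linear-growth bound plus the $M^2$-bounds give uniform integrability, so $\int_t^T\underline{f}_n(s,y^n_s,z^n_s)\,ds\to\int_t^T f(s,y_s,z_s)\,ds$ in $L^2$; the $g$-term converges by $(H1)$ and the convergence of $(y^n,z^n)$, and the stochastic integrals converge in $L^2$. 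Hence $(y,z)$ solves the BDSDE with data $(f,g,T,\xi)$, and by the comparison established above $y\le \tilde y$ for every solution $\tilde y$, so $(\underline y,\underline z):=(y,z)$ is the minimal solution.

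The main obstacle is the uniform a priori estimate in the presence of the backward It\^o integral driven by $g$: when applying It\^o to $|y^n_t|^2$ the term $\int_t^T|g(s,y^n_s,z^n_s)|^2ds$ appears with a $+$ sign and, via $(H1)$, contributes $\alpha\int_t^T|z^n_s|^2ds$, which must be absorbed by the $\int_t^T|z^n_s|^2ds$ coming from $-\int Z\,dW$; this forces careful bookkeeping of constants and is exactly where $\alpha<1$ in $(H1)$ is essential. A secondary technical point is justifying the $L^2$ passage to the limit in the driver term under only linear growth (rather than boundedness of $K$) — this is precisely the improvement over Shi et al. \cite{SGL2005} — but it follows from the $n$-independent $M^2$-bounds on $(y^n,z^n)$ and de la Vall\'ee-Poussin/dominated convergence, given $K\in M^2$.
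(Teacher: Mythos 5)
Your proposal is correct and follows essentially the same route as the paper: monotone Lipschitz approximation by $\underline{f}_n$, existence via Lemma \ref{l4}, monotonicity and minimality via the comparison theorem Lemma \ref{2}, It\^o estimates (using $\alpha<1$ in $(H1)$ to absorb the $g$-contribution) to show $\{z^n\}$ is Cauchy in $M^2$, and a uniform-integrability passage to the limit in the driver. The only cosmetic differences are that the paper obtains the uniform bound on $y^n$ by comparison with the solution of the auxiliary BDSDE driven by $h(t,y,z)=C(K_t+|y|+|z|)$ rather than by a direct Gronwall argument, and handles the limit of the driver term via Dini's theorem plus a truncation on $\{|y^n_t|+|z^n_t|\le N\}$ rather than via Vitali's theorem.
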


\begin{proof}
We only prove that BDSDE (\ref{1}) with data $(f, g, T, \xi)$ has a
minimal solution. The other case can be proved similarly. Let
$$h(\omega,t,y,z)=C(K_{t}(\omega)+|y|+|z|),$$
and $\underline{f}_{n}$ be introduced in Section 2.  Then,
$\underline{f}_{n}\leq h$, and we consider the following BDSDEs:
\begin{eqnarray}\label{eq2}
y_{t}^{n}=\xi+\int_{t}^{T}\underline{f}_{n}(s,y_{s}^{n},z_{s}^{n})ds
+\int_{t}^{T}g(s,y_{s}^{n},z_{s}^{n})dB_{s}-\int_{t}^{T}z_{s}^{n}dW_{s}
,\ \ t\in [0,T],
\end{eqnarray}
and
\begin{eqnarray*}
 U_{t}=\xi+\int_{t}^{T}h(s,U_{s},V_{s})ds
 +\int_{t}^{T}g(s,U_{s},V_{s})dB_{s}-\int_{t}^{T}V_{s}dW_{s}, \  \ t\in
 [0,T].
\end{eqnarray*}
From Lemma \ref{l4} it follows that the above BDSDEs have  unique
solutions $(y^{n},z^{n})\in
S^{2}(0,T;\mathbb{R})\times M^{2}(0,T;\mathbb{R}^{d}),$ and $
(U,V)\in S^{2}(0,T;\mathbb{R})\times M^{2}(0,T;\mathbb{R}^{d})$,
respectively.

By a comparison theorem for BDSDEs (see Lemma \ref{2} or
\cite{SGL2005} ) and Lemma \ref{l2} we have, for  $n>C$,
  $$y_{n}\leq y_{n+1}\leq U, \ d\mathbb{P}dt-a.s.$$
Then, there exists a positive constant $A$ independent of $n$ such
that
\begin{eqnarray*}
\parallel U\parallel_{S^{2}}\leq A, \parallel V\parallel_{M^{2}}\leq
A, \ \text{and} \ \parallel y_{n}\parallel_{S^{2}}\leq A.
\end{eqnarray*}
Therefore,  from the dominated convergence theorem it follows that
$\{y_{n}\}$ converges in $S^{2}(0,T;\mathbb{R})$. We shall denote
its limit by $\underline{y}\in S^{2}(0,T;\mathbb{R})$.

  By $(H1)$ and Young inequality we get
\begin{eqnarray*}
|g(t,y_{t}^{n},z_{t}^{n})|^{2}&\leq&(1+\frac{1-\alpha}{4\alpha})|
g(t,y_{t}^{n},z_{t}^{n})-g(t,0,0)|^{2}
+(1+\frac{4\alpha}{1-\alpha})\mid g(t,0,0)\mid^{2}\\
 &\leq& \frac{1+3\alpha}{4\alpha}C|y_{t}^{n}|^{2}+\frac{1+3\alpha}{4}|z_{t}^{n}|^{2}
 +\frac{1+3\alpha}{1-\alpha}|g(t,0,0)|^{2}.
\end{eqnarray*}
By virtue of Lemma \ref{l2} and using Young inequality we have
\begin{eqnarray*}
y_{t}^{n}\underline{f}_{n}(t,y_{t}^{n},z_{t}^{n})&\leq&C|y_{t}^{n}|(K_{t}+|y_{t}^{n}|+|z_{t}^{n}|)\\
 &\leq& (\frac{3C}{2}+\frac{C^{2}}{2-2\alpha})|y_{t}^{n}|^{2}+\frac{1-\alpha}{2}|z_{t}^{n}|^{2}
 +\frac{C}{2}|K_{t}|^{2}.
\end{eqnarray*}
Consequently, by the above inequalities and applying It\^o's formula
to $|y_{t}^{n}|^{2}$ and taking mathematical expectation, we obtain
 \begin{eqnarray*}
\mathbb{E}\int_{0}^{T}|z_{t}^{n}|^{2}dt&=&\mathbb{E}|\xi|^{2}-|
y_{0}^{n}|^{2}
  +2\mathbb{E}\int_{0}^{T}y_{t}^{n}\underline{f}_{n}(t,y_{t}^{n},z_{t}^{n})dt\\
  &&+\mathbb{E}\int_{0}^{T}| g(t,y_{t}^{n},z_{t}^{n})|^{2}dt\\
   &\leq&
   \mathbb{E}|\xi|^{2}+\frac{3+\alpha}{4}\mathbb{E}\int_{0}^{T}|z_{t}^{n}|^{2}dt+
   \frac{1+3\alpha}{1-\alpha} \mathbb{E}\int_{0}^{T}|g(t,0,0)|^{2}dt\\
   &&+\Big(\frac{3C}{2}+\frac{C^{2}}{2-2\alpha}+\frac{1+3\alpha}{4\alpha}C\Big)
   \mathbb{E}\int_{0}^{T}|y_{t}^{n}|^{2}dt.
   +\frac{C}{2} E\int_{0}^{T}|K_{t}|^{2}dt.
\end{eqnarray*}
Therefore,
 \begin{eqnarray*}
\mathbb{E}\int_{0}^{T}|z_{t}^{n}|^{2}dt &\leq&
   \frac{4}{1-\alpha}E|\xi|^{2}+
    \frac{4+12\alpha}{(1-\alpha)^{2}} \mathbb{E}\int_{0}^{T}|g(t,0,0)|^{2}dt\\
   &&+  \frac{4}{1-\alpha}\Big(\frac{3C}{2}+\frac{C^{2}}{2-2\alpha}+\frac{1+3\alpha}{4\alpha}C\Big)
   \mathbb{E}\int_{0}^{T}|y_{t}^{n}|^{2}dt
   +  \frac{2C}{1-\alpha} \mathbb{E}\int_{0}^{T}|K_{t}|^{2}dt\\
   &\doteq& A,
\end{eqnarray*}
which is bounded and independent of $n$.

Using It\^o's formula to $\mid y_{t}^{n}-y_{t}^{m}\mid^{2}$ we
obtain
\begin{eqnarray*}
 &&\mathbb{E}\int_{0}^{T}|z_{t}^{n}-z_{t}^{m}|^{2}dt+\mid
 y_{0}^{n}-y_{0}^{m}\mid^{2}\\
  &=&2\mathbb{E}[\int_{0}^{T}(y_{t}^{n}-y_{t}^{m})
  \Big(\underline{f}_{n}(t,y_{t}^{n},z_{t}^{n})-\underline{f}_{m}(t,y_{t}^{m},z_{t}^{m})\Big)dt]\\
  &&+\mathbb{E}\int_{0}^{T}\mid
  g(t,y_{t}^{n},z_{t}^{n})-g(t,y_{t}^{m},z_{t}^{m})\mid^{2}dt.
\end{eqnarray*}
From Lemma \ref{l2}, $\parallel z_{n} \parallel_{M^{2}}\leq A$ and
$\parallel y_{n}\parallel_{S^{2}}\leq A$ it follows that there
exists a positive constant $C_{0}$ independent of $n,m$ such that
\begin{eqnarray*}
\begin{aligned}
  &\mathbb{E}[\int_{0}^{T}(y_{t}^{n}-y_{t}^{m})
  (\underline{f}_{n}(t,y_{t}^{n},z_{t}^{n})-\underline{f}_{m}(t,y_{t}^{m},z_{t}^{m}))dt]\\
  &\leq C_{0}(\mathbb{E}\int_{0}^{T}|y_{t}^{n}-y_{t}^{m}|^{2}dt)^{\frac{1}{2}}.
\end{aligned}
\end{eqnarray*}
Therefore, by virtue of $(H1)$  we get
\begin{eqnarray*}
 &&\mathbb{E}\int_{0}^{T}|z_{t}^{n}-z_{t}^{m}|^{2}dt+\mid
 y_{0}^{n}-y_{0}^{m}\mid^{2}\\
   &\leq&
   C_{0}\Big\{\mathbb{E}\int_{0}^{T}|y_{t}^{n}-y_{t}^{m}|^{2}dt\Big\}^{\frac{1}{2}}\\
   &&+\alpha \mathbb{E}\int_{0}^{T}|z_{t}^{n}-z_{t}^{m}|^{2}dt
   +C\mathbb{E}\int_{0}^{T}|y_{t}^{n}-y_{t}^{m}|^{2}dt.
\end{eqnarray*}
Then, we deduce
\begin{eqnarray*}
  &&(1-\alpha)\mathbb{E}\int_{0}^{T}|z_{t}^{n}-z_{t}^{m}|^{2}dt\\
   &&\leq C_{0}\Big\{\mathbb{E}\int_{0}^{T}|y_{t}^{n}-y_{t}^{m}|^{2}dt\Big\}^{\frac{1}{2}}
   +C\mathbb{E}\int_{0}^{T}|y_{t}^{n}-y_{t}^{m}|^{2}dt.
\end{eqnarray*}
Therefore, $\{z^{n}\}_{n=1}^{\infty}$ is a Cauchy sequence in
$M^{2}(0,T;\mathbb{R}^{d})$. Then, there exists $\underline{z}\in
M^{2}(0,T;\mathbb{R}^{d})$ such that
$$\lim\limits_{n\rightarrow\infty}
\mathbb{E}[\int_{0}^{T}|z_{t}^{n}-\underline{z}_{t}|^{2}dt]=0.$$
Thanks to $(H1)$ and BDG inequality we know that there exists a
positive constant $C_{1}$ independent of $n$ such that
\begin{eqnarray*}
&&\mathbb{E}[\sup\limits_{t\in[0,T]}|\int_{t}^{T}g(s,\underline{y}_{s},\underline{z}_{s})dB_{s}
-\int_{t}^{T}g(s,y_{s}^{n},z_{s}^{n})dB_{s}|^{2}]\\
&\leq&C_{1}\mathbb{E}[\int_{0}^{T}|g(s,\underline{y}_{s},\underline{z}_{s})
-g(s,y_{s}^{n},z_{s}^{n})|^{2}ds]\\
&\leq&C_{1}\alpha\mathbb{E}[\int_{0}^{T}|z_{t}^{n}-\underline{z}_{t}|^{2}dt]+
C_{1}C\mathbb{E}[\int_{0}^{T}|y_{t}^{n}-\underline{y}_{t}|^{2}dt]\\
&\rightarrow& 0,\ \text{as}\ n\rightarrow\infty.
\end{eqnarray*}
 For all $N>0$ and $(t,y,z)\in
[0,T]\times \mathbb{R} \times \mathbb{R}^{d}$, from Lemma \ref{l2}
and Dini's Theorem it follows that
\begin{eqnarray*}
  \lim\limits_{n\rightarrow\infty}\sup\limits_{|y|+|z|\leq
  N}|\underline{f}_{n}(t,y,z)-f(t,y,z)|=0, \ dtd\mathbb{P}-a.s.
\end{eqnarray*}
Therefore, by the dominated convergence theorem we have
\begin{eqnarray*}
  \lim\limits_{n\rightarrow\infty}
\mathbb{E}\int_{0}^{T}|\underline{f}_{n}(t,y_{t}^{n},z_{t}^{n})-f(t,y_{t}^{n},z_{t}^{n})|
1_{\{|y_{t}^{n}|+|z_{t}^{n}|\leq N\}}dt=0.
\end{eqnarray*}
By virtue of $(H7)$ we know that $$\mathbb{E}\int_{0}^{T}|f
(t,y_{t}^{n},z_{t}^{n})-f(t,\underline{y}_{t},\underline{z}_{t})|
1_{\{|y_{t}^{n}|+|z_{t}^{n}|\leq N\}}dt$$ converges to $0$ at least
along a subsequence.

From Lemma \ref{l2} and $(H8')$ it follows that
\begin{eqnarray*}
 \mathbb{E}\int_{0}^{T}|\underline{f}_{n}(t,y_{t}^{n},z_{t}^{n})
 -f(t,\underline{y}_{t},\underline{z}_{t})|(|y_{t}^{n}|+|z_{t}^{n}|)dt
\doteq C_{2}<\infty.
\end{eqnarray*}
Here $C_{2}$ is a positive constant and independent of $n$.
Consequently,
\begin{eqnarray*}
 \mathbb{E}\int_{0}^{T}|\underline{f}_{n}(t,y_{t}^{n},z_{t}^{n})-f(t,\underline{y}_{t},\underline{z}_{t})|
1_{\{|y_{t}^{n}|+|z_{t}^{n}|> N\}}dt\leq \frac{C_{2}}{N}.
\end{eqnarray*}
Combining the above inequalities, passing to a subsequence if
necessary,  we have
\begin{eqnarray*}
&&\mathbb{E}\int_{0}^{T}|\underline{f}_{n}(t,y_{t}^{n},z_{t}^{n})-f(t,\underline{y}_{t},\underline{z}_{t})|dt\\
&\leq&
\mathbb{E}\int_{0}^{T}|\underline{f}_{n}(t,y_{t}^{n},z_{t}^{n})-f(t,y_{t}^{n},z_{t}^{n})|
1_{\{|y_{t}^{n}|+|z_{t}^{n}|\leq N\}}dt\\
&&+\mathbb{E}\int_{0}^{T}|f
(t,y_{t}^{n},z_{t}^{n})-f(t,\underline{y}_{t},\underline{z}_{t})|
1_{\{|y_{t}^{n}|+|z_{t}^{n}|\leq N\}}dt+\frac{C_{2}}{N}\\
&\rightarrow&\frac{C_{2}}{N},
\end{eqnarray*}
as $n\rightarrow\infty$. Thus, letting $N\rightarrow\infty,$ we have
\begin{eqnarray*}
\mathbb{E}\int_{0}^{T}|\underline{f}_{n}(t,y_{t}^{n},z_{t}^{n})-f(t,y_{t},z_{t})|dt
\rightarrow0,
\end{eqnarray*}
as $n\rightarrow\infty,$ passing to a subsequence if necessary.  We
now pass to the limit on both sides of BDSDE (\ref{eq2}),  passing
to a subsequence if necessary, it follows that
\begin{eqnarray*}\label{}
\underline{y}_{t}=\xi+\int_{t}^{T}f(s,\underline{y}_{s},\underline{z}_{s})ds
+\int_{t}^{T}g(s,\underline{y}_{s},\underline{z}_{s})dB_{s}-\int_{t}^{T}\underline{z}_{s}dW_{s}.
\end{eqnarray*}
Consequently,  BDSDE  with data $(f, g, T, \xi)$ has a solution
$(\underline{y},\underline{z})$.

Let $(y',z')$ be any solution of BDSDE with data $(f, g, T, \xi)$.
Then, let us consider  the following BDSDEs:
\begin{eqnarray*}\label{}
y'_{t}=\xi+\int_{t}^{T}f(s,y'_{s},z'_{s})ds
+\int_{t}^{T}g(s,y'_{s},z'_{s})dB_{s}-\int_{t}^{T}z'_{s}dW_{s},  \
t\in [0,T],
\end{eqnarray*}
and
\begin{eqnarray*}
y_{t}^{n}=\xi+\int_{t}^{T}\underline{f}_{n}(s,y_{s}^{n},z_{s}^{n})ds
+\int_{t}^{T}g(s,y_{s}^{n},z_{s}^{n})dB_{s}-\int_{t}^{T}z_{s}^{n}dW_{s}
,\ \ t\in [0,T],
\end{eqnarray*}
By virtue of Lemma \ref{2}  we have $y^{n} \leq y'.$ Consequently,
due to the first part of the proof and taking the limit we have
$\underline{y} \leq y'.$
 The proof is complete.
\end{proof}\\

If $\{K_{t}\}_{t\in [0,T]}$ is a bounded process, then we have the
following corollary, which was obtained by Shi et al.
\cite{SGL2005}.

\begin{corollary} \label{}
Under the assumptions
 $(H7)$ and $(H8)$, BDSDE with data $(f,g,T,\xi)$ has the minimal solution
 $(\underline{y}_{t},\underline{z}_{t})_{0 \leq t \leq T}$
 (resp. maximal solution $(\overline{y}_{t},\overline{z}_{t})_{0 \leq t \leq
 T}$). Moreover, for all $t\in[0,T]$,
  $$\underline{y}_{t}^{n}\leq \underline{y}_{t}^{n+1}\leq \underline{y}_{t}\leq \overline{y}_{t}
  \leq\overline{y}_{t}^{n+1}\leq\overline{y}_{t}^{n}.$$
 And $(\underline{y}^{n},\underline{z}^{n})\rightarrow
  (\underline{y},\underline{z})$  and $(\overline{y}^{n},\overline{z}^{n})
  \rightarrow (\overline{y},\overline{z})$ both in $S^{2}(0,T;\mathbb{R})\times
  M^{2}(0,T;\mathbb{R}^{d})$, as $n\rightarrow \infty$, where $(\underline{y}^{n},\underline{z}^{n})$ is the
  unique solution of BDSDE  with data $(\underline{f}_{n},g,T,\xi)$ and
  $(\overline{y}^{n},\overline{z}^{n})$ is the
  unique solution of BDSDE  with data $(\overline{f}_{n},g,T,\xi)$.
\end{corollary}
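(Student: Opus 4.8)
The plan is to read the Corollary off from Theorem \ref{t1} together with the comparison theorem (Lemma \ref{2}), exploiting the extra feature that under $(H8)$ the penalized coefficients $\underline{f}_n$ and $\overline{f}_n$ are monotone in $n$.

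First observe that $(H8)$ is nothing but $(H8')$ with $K_t\equiv 1$, a bounded (hence square integrable) process on $[0,T]$; thus Theorem \ref{t1} applies and already produces a minimal solution $(\underline{y},\underline{z})$ and a maximal solution $(\overline{y},\overline{z})$ of the BDSDE with data $(f,g,T,\xi)$. Moreover Lemma \ref{3} (the $(H8)$ version of Lemma \ref{l2}) gives, for $n>C$: $\underline{f}_{n}\le f\le\overline{f}_{n}$, with $\underline{f}_{n}$ non-decreasing and $\overline{f}_{n}$ non-increasing in $n$; each $\underline{f}_{n}$ (resp.\ $\overline{f}_{n}$) is Lipschitz with constant $n$, hence satisfies $(H3)$, and it satisfies $(H4)$ because $\underline{f}_{n}(\cdot,0,0)$ (resp.\ $\overline{f}_{n}(\cdot,0,0)$) is bounded in absolute value by $C$. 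By Lemma \ref{l4}, the BDSDE with data $(\underline{f}_{n},g,T,\xi)$ (resp.\ $(\overline{f}_{n},g,T,\xi)$) then has a unique solution $(\underline{y}^{n},\underline{z}^{n})$ (resp.\ $(\overline{y}^{n},\overline{z}^{n})$).

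Next I establish the chain of inequalities by repeated use of Lemma \ref{2}. Since $\underline{f}_{n}\le\underline{f}_{n+1}$ and $\underline{f}_{n}$ satisfies $(H3)$--$(H4)$, comparison of the solutions with data $(\underline{f}_{n},g,T,\xi)$ and $(\underline{f}_{n+1},g,T,\xi)$ (both with terminal value $\xi$) gives $\underline{y}^{n}_{t}\le\underline{y}^{n+1}_{t}$. Taking $f^{1}=\underline{f}_{n+1}$, which satisfies $(H3)$--$(H4)$, and $f^{2}=f$, and using $\underline{f}_{n+1}(t,\underline{y}_{t},\underline{z}_{t})\le f(t,\underline{y}_{t},\underline{z}_{t})$, Lemma \ref{2} yields $\underline{y}^{n+1}_{t}\le\underline{y}_{t}$. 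The inequality $\underline{y}_{t}\le\overline{y}_{t}$ is immediate from the minimality of $(\underline{y},\underline{z})$, since $(\overline{y},\overline{z})$ is itself a solution. The inequalities $\overline{y}_{t}\le\overline{y}^{n+1}_{t}\le\overline{y}^{n}_{t}$ follow symmetrically, now invoking the alternative hypothesis of Lemma \ref{2}: compare $(f,g,T,\xi)$ with $(\overline{f}_{n+1},g,T,\xi)$ using $f(t,\overline{y}_{t},\overline{z}_{t})\le\overline{f}_{n+1}(t,\overline{y}_{t},\overline{z}_{t})$, and $(\overline{f}_{n+1},g,T,\xi)$ with $(\overline{f}_{n},g,T,\xi)$ using $\overline{f}_{n+1}\le\overline{f}_{n}$.

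Finally, the convergence. For the minimal solution this is exactly what is shown in the proof of Theorem \ref{t1}: $\{\underline{y}^{n}\}$ is non-decreasing and bounded above by $\overline{y}$ (equivalently, by the solution of the linear majorant BDSDE appearing in that proof), so it converges to $\underline{y}$ in $S^{2}(0,T;\mathbb{R})$ by dominated convergence, while applying It\^o's formula to $|\underline{y}^{n}_{t}-\underline{y}^{m}_{t}|^{2}$ together with the uniform bounds $\|\underline{y}^{n}\|_{S^{2}}\le A$, $\|\underline{z}^{n}\|_{M^{2}}\le A$ shows that $\{\underline{z}^{n}\}$ is a Cauchy sequence in $M^{2}(0,T;\mathbb{R}^{d})$, with limit $\underline{z}$. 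The same scheme applied to the non-increasing sequence $\{\overline{y}^{n}\}$, bounded below by $\underline{y}$, gives $(\overline{y}^{n},\overline{z}^{n})\to(\overline{y},\overline{z})$ in $S^{2}\times M^{2}$, and passing to the limit in the approximating BDSDEs, as in Theorem \ref{t1}, identifies the limit as the maximal solution. The step requiring the most care is the use of Lemma \ref{2} to compare the Lipschitz penalizations with the merely continuous $f$: this is legitimate only because in each such comparison it is $\underline{f}_{n}$ (resp.\ $\overline{f}_{n}$) that plays the role of the coefficient verifying $(H3)$--$(H4)$, while $f$ is tested only along the fixed path of the limit solution; this is also precisely where the boundedness in $(H8)$, rather than the mere square integrability of $(H8')$, is convenient, since it makes $\underline{f}_{n}(\cdot,0,0)$ and $\overline{f}_{n}(\cdot,0,0)$ bounded and hence $(H4)$ trivial.
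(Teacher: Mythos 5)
Your proof is correct and follows exactly the route the paper intends: the paper states this corollary without a separate proof, as an immediate consequence of Theorem \ref{t1} (noting that $(H8)$ is $(H8')$ with $K_t\equiv 1$), and your argument simply makes explicit the monotone approximation, the repeated applications of Lemma \ref{2}, and the convergence steps already carried out in the proof of Theorem \ref{t1}. No gaps.
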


\section{Existence theorem and comparison theorems for BDSDEs
  with discontinuous coefficients}

  The objective of this section is to investigate  an  existence theorem and a comparison theorem
  for solutions of BDSDEs with discontinuous coefficients.

We shall give a comparison theorem for BDSDEs (\ref{1}) under the
conditions that $f$ is Lipschitz in $y$ and uniformly continuous in
$z$, i.e.,

 \vspace{2mm}
   $(H10)$ There exists a positive constant $C$  such that,
  for all $(t,y_{i},z_{i})\in [0,T]\times \mathbb{R} \times \mathbb{R}^{d},$  $i=1, 2,$
  $$|f(t,y_{1},z_{1})-f(t,y_{2},z_{2})|\leq C|y_{1}-y_{2}|+\phi(|z_{1}-z_{2}|),$$
where $\phi$ is introduced in $(H5)$.

We  need the following existence theorem and uniqueness theorem for
BDSDEs, which was established in \cite{LW2008}.
\begin{lemma} \label{5}
 Under the assumptions $(H4)$ and $(H10)$, BDSDE (\ref{1}) has a unique solution $(Y,Z)\in
S^{2}(0,T;\mathbb{R})\times M^{2}(0,T;\mathbb{R}^{d})$.
\end{lemma}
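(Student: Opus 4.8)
The plan is to obtain existence as an immediate consequence of Theorem~\ref{t1}, and then to prove uniqueness by a limiting argument in which the uniform continuity of $f$ in $z$ enters in an essential way. For existence, observe first that $(H10)$ together with $(H4)$ implies $(H7)$ and $(H8')$: a generator that is Lipschitz in $y$ and uniformly continuous in $z$ is continuous in $(y,z)$, which is $(H7)$, while applying $(H10)$ between $(y,z)$ and $(0,0)$ and invoking the linear growth of $\phi$ gives $|f(t,y,z)|\le |f(t,0,0)|+C|y|+\phi(|z|)\le C'\big(K_t+|y|+|z|\big)$ with $K_t:=|f(t,0,0)|+C$, and $K\in M^{2}(0,T;\mathbb{R})$ by $(H4)$ since $T<\infty$, which is $(H8')$. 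Hence Theorem~\ref{t1} yields a minimal solution $(\underline Y,\underline Z)$ and a maximal solution $(\overline Y,\overline Z)$ of the BDSDE with data $(f,g,T,\xi)$. Moreover, from the proof of Theorem~\ref{t1}, $\underline Y$ (resp. $\overline Y$) is the $S^{2}$-limit of the non-decreasing (resp. non-increasing) sequence $\underline Y^{n}$ (resp. $\overline Y^{n}$), where $(\underline Y^{n},\underline Z^{n})$ (resp. $(\overline Y^{n},\overline Z^{n})$) is the unique solution — given by Lemma~\ref{l4} — of the BDSDE with data $(\underline f_{n},g,T,\xi)$ (resp. $(\overline f_{n},g,T,\xi)$), and by Lemma~\ref{2} every solution $(Y',Z')$ satisfies $\underline Y^{n}_{t}\le Y'_{t}\le\overline Y^{n}_{t}$ and therefore $\underline Y_{t}\le Y'_{t}\le\overline Y_{t}$, a.s.

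It remains to prove $\underline Y=\overline Y$; this gives uniqueness, since any solution $Y'$ is then squeezed to equal $\underline Y$, and applying It\^o's formula to the identically vanishing process $|Y'_{t}-\underline Y_{t}|^{2}$ forces $\mathbb{E}\int_{0}^{T}|Z'_{s}-\underline Z_{s}|^{2}\,ds=0$, so $Z'=\underline Z$ in $M^{2}$. Two preliminary facts are needed. First, $\underline f_{n}\le f\le\overline f_{n}$ and the gap closes \emph{uniformly}: using $(H10)$ one checks $0\le f-\underline f_{n}\le\varepsilon_{n}$ and $0\le\overline f_{n}-f\le\varepsilon_{n}$ with $\varepsilon_{n}:=\sup_{r\ge 0}\big(\phi(r)-nr\big)$, and $\varepsilon_{n}\to 0$ because $\phi$ is continuous with $\phi(0)=0$ (which controls $\phi(r)-nr$ for small $r$) and has linear growth (which controls it for large $r$); the same computation shows that $\underline f_{n},\overline f_{n}$ are, up to the additive error $\varepsilon_{n}$, uniformly continuous in $z$ with modulus $\phi$. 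Second, the proof of Theorem~\ref{t1} provides a constant $A$, independent of $n$, with $\|\underline Z^{n}\|_{M^{2}},\|\overline Z^{n}\|_{M^{2}}\le A$ and $\|\underline Y^{n}\|_{S^{2}},\|\overline Y^{n}\|_{S^{2}}\le A$; in particular $\overline Y^{n}-\underline Y^{n}\to\overline Y-\underline Y$ in $S^{2}$.

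The heart of the matter is the estimate of $\hat Y:=\overline Y-\underline Y\ge 0$ and $\hat Z:=\overline Z-\underline Z$. Applying It\^o's formula for BDSDEs to $|\overline Y_{t}-\underline Y_{t}|^{2}$, taking expectations, and using $(H1)$ for $g$ — which produces $\alpha\,\mathbb{E}\int_{t}^{T}|\hat Z_{s}|^{2}\,ds$ on the right with $\alpha<1$, hence leaves a factor $(1-\alpha)\,\mathbb{E}\int_{t}^{T}|\hat Z_{s}|^{2}\,ds$ on the left — one reduces, after writing $f(s,\overline Y_{s},\overline Z_{s})-f(s,\underline Y_{s},\underline Z_{s})$ as a term bounded by $C\hat Y_{s}$ plus one bounded by $\phi(|\hat Z_{s}|)$, to controlling $2\,\mathbb{E}\int_{t}^{T}\hat Y_{s}\,\phi(|\hat Z_{s}|)\,ds$. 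This single term is the main obstacle: since $\phi$ need not be Lipschitz, a plain application of Young's inequality leaves an irreducible positive remainder and the estimate does not close on all of $[0,T]$. The route I would take is to split according to the size of $|\hat Z_{s}|$: on $\{|\hat Z_{s}|\le\delta\}$ bound $\phi(|\hat Z_{s}|)\le\phi(\delta)$, which is small; on $\{|\hat Z_{s}|>\delta\}$ use linear growth, $\phi(|\hat Z_{s}|)\le C(1+\delta^{-1})|\hat Z_{s}|$, followed by Young's inequality to absorb a term $\eta\,\mathbb{E}\int_{t}^{T}|\hat Z_{s}|^{2}\,ds$ with $\eta$ chosen so that $\alpha+\eta<1$; then couple this with the uniform $M^{2}$-bound on $\underline Z^{n},\overline Z^{n}$, with the monotone convergences $\underline Y^{n}\uparrow\underline Y$, $\overline Y^{n}\downarrow\overline Y$ and the closeness $\|\overline f_{n}-\underline f_{n}\|_{\infty}\le 2\varepsilon_{n}$, and conclude by Gronwall's inequality applied on short subintervals of $[0,T]$ which are then patched together. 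The delicate part — where the full strength of $(H10)$ (not merely continuity) and of $\alpha<1$ is genuinely used — is to coordinate the choices of $\delta$, $\eta$ and the subinterval length so that the small factor $\phi(\delta)$ beats the Gronwall constant it generates; I expect this coordination to be the crux of the whole proof.
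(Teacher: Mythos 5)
The paper does not prove this lemma at all: it is quoted directly from Lin and Wu \cite{LW2008}, so there is no internal proof to compare against. Judged on its own terms, your existence argument is correct and economical: $(H10)$ together with $(H4)$ does imply $(H7)$ and $(H8')$ (with $K_t=|f(t,0,0)|+C\in M^{2}$ because $T<\infty$), so Theorem \ref{t1} delivers minimal and maximal solutions, and Lemma \ref{2} squeezes any solution between the Lipschitz approximants. Your observation that $0\le f-\underline f_{n}\le\varepsilon_{n}$ with $\varepsilon_{n}=\sup_{r\ge 0}(\phi(r)-nr)\to 0$ is also correct.

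The uniqueness half, however, has a genuine gap, and it sits exactly where you say you ``expect the crux'' to be --- which means the crux is missing, not merely deferred. After the $\delta$-splitting, Young's inequality on $\{|\hat Z_{s}|>\delta\}$ forces a Gronwall coefficient of order $\delta^{-2}$ (you must keep $\eta$ fixed with $\alpha+\eta<1$, so the coefficient of $|\hat Y_{s}|^{2}$ is $C^{2}(1+\delta^{-1})^{2}/\eta$), while the residual forcing term is only of order $\phi(\delta)$ or $\phi(\delta)^{2}$. Gronwall then yields a bound of the form $\phi(\delta)^{2}\,T\,e^{c\delta^{-2}T}$, which diverges as $\delta\to 0$ for any genuinely non-Lipschitz modulus --- in particular for $\phi(r)\sim\sqrt{r}$, the very case (a generator containing $\sqrt{z1_{\{z\ge 0\}}}$) that motivates the paper. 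Patching subintervals of length $h\sim\delta^{2}$ does not rescue this: the per-interval error is multiplied by a fixed factor greater than $1$ at each step and there are of order $\delta^{-2}$ steps, so the accumulated bound is again $e^{c\delta^{-2}}\phi(\delta)$. The same obstruction defeats the alternative route through $\|\overline f_{n}-\underline f_{n}\|_{\infty}\le 2\varepsilon_{n}$, because the stability constant for the $n$-Lipschitz approximating BDSDEs grows like $e^{cn^{2}T}$. Uniqueness under uniform continuity in $z$ is a genuinely hard theorem; the proof in \cite{LW2008} rests on a different idea and cannot be recovered by coordinating $\delta$, $\eta$ and the subinterval length in a Young--Gronwall scheme.
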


 Since $\phi$ is uniformly
continuous, then we can not apply  comparison theorems for solutions
of BDSDEs in \cite{SGL2005} and  \cite{LP2008} to the proofs of
Lemma \ref{7} and Theorem \ref{10}. We now establish a comparison
theorem of BDSDEs when $f$ satisfies the condition $(H10)$, which
plays an important role in the proofs of Lemma \ref{7} and Theorem
\ref{10}.

\begin{theorem}\label{th2}
Suppose that  BDSDEs  with data $(f^{1},g,T,\xi^{1})$ and
$(f^{2},g,T,\xi^{2})$ have solutions $(y^{1},z^{1})$ and
$(y^{2},z^{2})$, respectively. If $f^{1}$ satisfies $(H4)$ and
$(H10)$, $\xi^{1}\leq \xi^{2}$, $a.s.$, $f^{1}(t, y_{t}^{2},
z_{t}^{2})\leq f^{2}(t, y_{t}^{2}, z_{t}^{2})$, $d\mathbb{P}dt-a.s.$
(resp. $f^{2}$ satisfies $(H4)$ and $(H10)$, $f^{1}(t, y_{t}^{1},
z_{t}^{1}) \leq f^{2}(t,y_{t}^{1}, z_{t}^{1})$,
$d\mathbb{P}dt-a.s.$), then we have $y_{t}^{1} \leq y_{t}^{2}$, $
a.s.$,  for all $  \ t \in [0,T]$.
\end{theorem}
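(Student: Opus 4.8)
The plan is to establish $y^1_t\le y^2_t$ by the standard linearization/Itô argument for comparison, but taking care of the fact that the $z$-variable enters through the non-Lipschitz modulus $\phi$ rather than linearly. I would work with the positive part $(y^1_t-y^2_t)^+$ and apply Itô's formula to $|(y^1_t-y^2_t)^+|^2$; the key is to show $\mathbb{E}|(y^1_0-y^2_0)^+|^2\le 0$, which together with $(y^1_T-y^2_T)^+=(\xi^1-\xi^2)^+=0$ forces $(y^1_t-y^2_t)^+\equiv 0$. Write $\hat Y=y^1-y^2$, $\hat Z=z^1-z^2$. Applying Itô on $[t,T]$,
\begin{eqnarray*}
|\hat Y_t^+|^2 &=& -2\int_t^T \hat Y_s^+\bigl(f^1(s,y^1_s,z^1_s)-f^2(s,y^2_s,z^2_s)\bigr)ds
 -2\int_t^T \hat Y_s^+\bigl(g(s,y^1_s,z^1_s)-g(s,y^2_s,z^2_s)\bigr)dB_s\\
 && +2\int_t^T \hat Y_s^+\hat Z_s\,dW_s
 -\int_t^T 1_{\{\hat Y_s>0\}}|\hat Z_s|^2 ds
 +\int_t^T 1_{\{\hat Y_s>0\}}|g(s,y^1_s,z^1_s)-g(s,y^2_s,z^2_s)|^2 ds,
\end{eqnarray*}
after taking expectations the stochastic integrals vanish (using the usual square-integrability of the integrands, which holds since the solutions lie in $S^2\times M^2$).

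The crucial step is bounding the driver term. On $\{\hat Y_s>0\}$ I split
$f^1(s,y^1_s,z^1_s)-f^2(s,y^2_s,z^2_s) = \bigl(f^1(s,y^1_s,z^1_s)-f^1(s,y^2_s,z^2_s)\bigr) + \bigl(f^1(s,y^2_s,z^2_s)-f^2(s,y^2_s,z^2_s)\bigr)$; the second bracket is $\le 0$ by hypothesis (in the first "resp." case one instead evaluates $f^1,f^2$ at $(y^1,z^1)$, and splits through $f^2$ — the two cases are symmetric). For the first bracket, $(H10)$ gives $|f^1(s,y^1_s,z^1_s)-f^1(s,y^2_s,z^2_s)|\le C|\hat Y_s|+\phi(|\hat Z_s|)$, so the contribution is bounded by $2C|\hat Y_s^+|^2 + 2\hat Y_s^+\phi(|\hat Z_s|)$. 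For the $g$-term, $(H1)$ gives $|g(s,y^1_s,z^1_s)-g(s,y^2_s,z^2_s)|^2\le C|\hat Y_s|^2+\alpha|\hat Z_s|^2$. Collecting everything,
\begin{eqnarray*}
\mathbb{E}|\hat Y_t^+|^2 &\le& \mathbb{E}\int_t^T\Bigl( 2C|\hat Y_s^+|^2 + 2\hat Y_s^+\phi(|\hat Z_s|) + C|\hat Y_s^+|^2 + \alpha 1_{\{\hat Y_s>0\}}|\hat Z_s|^2 - 1_{\{\hat Y_s>0\}}|\hat Z_s|^2\Bigr)ds.
\end{eqnarray*}

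The main obstacle is the cross term $\hat Y_s^+\phi(|\hat Z_s|)$, which cannot be absorbed by the $-(1-\alpha)1_{\{\hat Y_s>0\}}|\hat Z_s|^2$ term by a naive Young inequality because $\phi$ is merely of linear growth, not Lipschitz — on the set where $\hat Y_s^+>0$ the quantity $\phi(|\hat Z_s|)^2$ is only $O(1+|\hat Z_s|^2)$, and the constant in front of $|\hat Z_s|^2$ after Young may not stay below $1-\alpha$. The natural fix, which I expect the author uses, is to \emph{not} throw away the negative $|\hat Z_s|^2$ term but rather split $\phi$ using its structure: since $\phi$ is continuous, sub-additive, non-decreasing with $\phi(0)=0$, for each $\varepsilon>0$ there is $M_\varepsilon$ with $\phi(r)\le \varepsilon r + M_\varepsilon$ for all $r\ge 0$ (this is a standard consequence of linear growth plus $\phi(0)=0$ and continuity). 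Then $2\hat Y_s^+\phi(|\hat Z_s|)\le 2\varepsilon \hat Y_s^+|\hat Z_s| + 2M_\varepsilon \hat Y_s^+ \le \varepsilon^2|\hat Z_s|^2 + |\hat Y_s^+|^2 + \varepsilon|\hat Z_s|^2\cdot 0 + \ldots$ — more carefully, use $2\varepsilon \hat Y_s^+|\hat Z_s|1_{\{\hat Y_s>0\}}\le (1-\alpha)1_{\{\hat Y_s>0\}}|\hat Z_s|^2 + \frac{\varepsilon^2}{1-\alpha}|\hat Y_s^+|^2$ and $2M_\varepsilon\hat Y_s^+\le M_\varepsilon^2 + |\hat Y_s^+|^2$. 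After choosing $\varepsilon$ small enough that the $|\hat Z_s|^2$ coefficients cancel, one is left with $\mathbb{E}|\hat Y_t^+|^2 \le K\mathbb{E}\int_t^T |\hat Y_s^+|^2 ds$ for some constant $K$ (the spurious additive $M_\varepsilon^2$ term must actually be handled by localizing $\varepsilon$ on $\{\hat Z_s\ne 0\}$, or by a slightly sharper inequality $\phi(r)\le\varepsilon r$ for $r\ge r_\varepsilon$ combined with $\phi$ bounded on $[0,r_\varepsilon]$ but multiplied against $\hat Y_s^+$ which is $L^2$ — in any case the upshot is a linear Gronwall bound with no free additive constant because every term carries a factor $\hat Y_s^+$ or $|\hat Z_s|^2$). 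Finally Gronwall's lemma yields $\mathbb{E}|\hat Y_t^+|^2=0$ for all $t$, hence $y^1_t\le y^2_t$ a.s. for all $t\in[0,T]$, and continuity of the paths upgrades this to an a.s. statement simultaneously in $t$.
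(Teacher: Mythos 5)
Your overall strategy (It\^o--Tanaka on $|(y^1_t-y^2_t)^+|^2$ plus Gronwall) is not the paper's, and unfortunately it breaks down at exactly the step you flag as "the crucial step." The inequality you rely on, namely that for every $\varepsilon>0$ there is $M_\varepsilon$ with $\phi(r)\le\varepsilon r+M_\varepsilon$, is \emph{false} for a general $\phi$ satisfying $(H5)$: by subadditivity $\lim_{r\to\infty}\phi(r)/r=\inf_{r>0}\phi(r)/r$, so your inequality is equivalent to $\phi(r)/r\to 0$, which already fails for $\phi(r)=Cr$ (and for any modulus with a genuinely linear component). The correct standard fact goes the other way: for every $\varepsilon>0$ there is $C_\varepsilon$ with $\phi(r)\le\varepsilon+C_\varepsilon r$ (small additive constant, large slope). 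But substituting that version does not rescue the argument either: the term $2C_\varepsilon\hat Y_s^+|\hat Z_s|$ is absorbed by Young at the price of a Gronwall constant $K_\varepsilon\sim C_\varepsilon^2/(1-\alpha)$, and the leftover $2\varepsilon\hat Y_s^+$ produces the bound $\mathbb{E}|\hat Y_t^+|^2\le\varepsilon\,T\,e^{K_\varepsilon T}$, which does \emph{not} tend to $0$ as $\varepsilon\to0$ because $C_\varepsilon\to\infty$ unless $\phi$ is Lipschitz (try $\phi(r)=\sqrt{r}$: $C_\varepsilon\sim 1/\varepsilon$ and $\varepsilon e^{T/\varepsilon^2}\to\infty$). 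Your parenthetical fixes ("localizing $\varepsilon$ on $\{\hat Z_s\ne 0\}$", "every term carries a factor $\hat Y_s^+$ or $|\hat Z_s|^2$") do not close this: a term linear in $\hat Y_s^+$ cannot be dominated by a multiple of $|\hat Y_s^+|^2$ without an additive constant, and that constant is fatal to the Gronwall conclusion. This failure of the direct linearization/Gronwall argument is precisely the known obstruction for drivers that are only uniformly continuous in $z$.

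The paper circumvents this by approximation rather than by a direct estimate: it sets $f^1_n(t,y,z)=\inf_{v\in\mathbb{R}^d}\{f^1(t,y,v)+n|z-v|\}$, which is Lipschitz in $(y,z)$ (constant $C$ in $y$, $n$ in $z$), increases to $f^1$, and satisfies $f^1_n\le f^1\le f^2$ along $(y^2,z^2)$. The Lipschitz comparison theorem (Lemma \ref{2}) then gives $\underline{y}^n_t\le y^2_t$ for the solution $(\underline{y}^n,\underline{z}^n)$ of the BDSDE with driver $f^1_n$, and the monotone-stability argument of Theorem \ref{t1} together with the uniqueness of Lemma \ref{5} identifies $\lim_n\underline{y}^n=y^1$, whence $y^1\le y^2$. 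If you want to salvage your write-up, you should replace the direct estimate of the cross term by this inf-convolution scheme (or prove the comparison first for Lipschitz approximants and pass to the limit); as written, the proof has a genuine gap.
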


\begin{proof}
We only prove the first case, the other case can be proved
similarly. For $n\in \mathbb{N}, (t,y,z)\in[0,T]\times
\mathbb{R}\times \mathbb{R}^{d}$, we let
$$f^{1}_{n}(t,y,z)=\inf\limits_{v\in \mathbb{R}^{d}}\Big\{f^{1}(t,y,v)+n|z-v|\Big\}.$$
Then, since $f^{1}(t, y_{t}^{2}, z_{t}^{2})\leq f^{2}(t, y_{t}^{2},
z_{t}^{2})$, $a.s.,$ we have $f^{1}_{n}(t, y_{t}^{2}, z_{t}^{2})\leq
f^{1}(t, y_{t}^{2}. z_{t}^{2})\leq f^{2}(t, y_{t}^{2}, z_{t}^{2})$,
$a.s.$

\vspace{2mm} From Lemma \ref{3} and $(H10)$ it follows that, for all
$(t,y_{i},z_{i})\in [0,T]\times \mathbb{R} \times \mathbb{R}^{d},$
$i=1, 2,$
  $$|f^{1}_{n}(t,y_{1},z_{1})-f^{1}_{n}(t,y_{2},z_{2})|\leq C|y_{1}-y_{2}|+n|z_{1}-z_{2}|,$$
and we consider the following BDSDE:
\begin{eqnarray*}
\underline{y}_{t}^{n}=\xi^{1}+\int_{t}^{T}f^{1}_{n}(s,\underline{y}_{s}^{n},\underline{z}_{s}^{n})ds
+\int_{t}^{T}g(s,\underline{y}_{s}^{n},\underline{z}_{s}^{n})dB_{s}-\int_{t}^{T}\underline{z}_{s}^{n}dW_{s}
, \ t\in [0,T],
 \end{eqnarray*}
and
\begin{eqnarray*}
 y^{2}_{t}=\xi^{2}+\int_{t}^{T}f^{2}(s,y^{2}_{s},z^{2}_{s})ds
 +\int_{t}^{T}g(s,y^{2}_{s},z^{2}_{s})dB_{s}-\int_{t}^{T}z^{2}_{s}dW_{s}, \  t\in
 [0,T].
\end{eqnarray*}
  By virtue of  Lemma \ref{2}  we obtain $\underline{y}_{t}^{n}\leq
y_{t}^{2}$, for $n>C$.  Theorem \ref{t1} and Lemma \ref{5}  yield
$$y_{t}^{1} \leq y_{t}^{2}, \quad a.s., \ \text{for all}\ \ t \in [0,T].$$
 The proof is complete.
\end{proof}\\

From now we study an  existence theorem and a comparison theorem
  for solutions of BDSDEs under the conditions  $ (H5),(H6)$ and $(H9)$.

\vspace{2mm} From  $(H9)$  we know that there exist two BDSDEs:
$i=1,2,$
\begin{eqnarray*}
Y_{t}^{i}=\xi+\int_{t}^{T}f_{i}(s,Y_{s}^{i},Z_{s}^{i})ds
+\int_{t}^{T}g(s,Y_{s}^{i},Z_{s}^{i})dB_{s}-\int_{t}^{T}Z_{s}^{i}dW_{s}
, \ t\in [0,T]
\end{eqnarray*}
such that $f_{i}(\cdot,Y_{\cdot}^{i},Z_{\cdot}^{i})\in
M^{2}(0,T;\mathbb{R})$.

  \vspace{2mm}We now construct a sequence of BDSDEs as follows:
\begin{eqnarray}\label{6}
\begin{aligned}
 y_{t}^{n}=\xi&+\int_{t}^{T}[f(s,y_{s}^{n-1},z_{s}^{n-1})-C(y_{s}^{n}-y_{s}^{n-1})
 -\phi(|z_{s}^{n}-z_{s}^{n-1}|)]ds \\
&+\int_{t}^{T}g(s,y_{s}^{n},z_{s}^{n})dB_{s}-\int_{t}^{T}z_{s}^{n}dW_{s}
,\  \ t\in [0,T],
\end{aligned}
\end{eqnarray}
where $n=1,2,\cdots,$ and $(y^{0},z^{0})=(Y^{1},Z^{1})$. We have the
following lemma:

\begin{lemma}\label{7}
 Under the assumptions $(H5),(H6)$ and $(H9)$, for all
$n=1,2,\cdots,$  BDSDE (\ref{6}) has a unique solution
$(y^{n},z^{n})\in S^{2}(0,T;\mathbb{R})\times
M^{2}(0,T;\mathbb{R}^{d})$,  and $Y_{t}^{1}\leq y_{t}^{n}\leq
y_{t}^{n+1}\leq Y_{t}^{2}$, a.s., for all $t\in [0,T]$.
\end{lemma}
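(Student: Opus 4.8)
The plan is to prove the statement by induction on $n$, establishing simultaneously the existence/uniqueness of $(y^n,z^n)$ and the monotonicity bounds $Y^1_t\le y^n_t\le y^{n+1}_t\le Y^2_t$. First I would observe that BDSDE (\ref{6}), viewed as an equation for $(y^n,z^n)$ with the previous iterate $(y^{n-1},z^{n-1})$ treated as a given datum, has a driver
\[
\widetilde f(s,y,z)=f(s,y^{n-1}_s,z^{n-1}_s)-C(y-y^{n-1}_s)-\phi(|z-z^{n-1}_s|).
\]
This driver is Lipschitz in $y$ with constant $C$ (the term $-Cy$), and it satisfies $|\widetilde f(s,y,z_1)-\widetilde f(s,y,z_2)|\le \phi(|z_1-z_2|)$ by sub-additivity of $\phi$, so it satisfies $(H10)$; moreover $\widetilde f(\cdot,0,0)\in M^2$ because $f(\cdot,y^{n-1}_\cdot,z^{n-1}_\cdot)\in M^2$ (this needs to be carried along the induction), $y^{n-1}\in S^2\subset M^2$, and $\phi(|z^{n-1}_\cdot|)$ is in $M^2$ by the linear growth of $\phi$ and $z^{n-1}\in M^2$. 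Hence by Lemma~\ref{5} applied to $\widetilde f$, BDSDE (\ref{6}) has a unique solution $(y^n,z^n)\in S^2(0,T;\mathbb{R})\times M^2(0,T;\mathbb{R}^d)$. This handles the existence/uniqueness half; the base case $n=1$ uses $(y^0,z^0)=(Y^1,Z^1)$, which lies in the right spaces by $(H9)$, and one also needs $f(\cdot,Y^1_\cdot,Z^1_\cdot)\in M^2$, which follows from $f_1(\cdot,Y^1_\cdot,Z^1_\cdot)\in M^2$ together with $|f(s,Y^1_s,Z^1_s)-f_1(s,Y^1_s,Z^1_s)|\le |f|+|f_1|\le$ (a linear-growth bound via Remark-type estimates) — more carefully, one compares $f$ with $f_1$ and $f_2$ using $(H9)$ and the fact that $f_1,f_2$ evaluated along the respective solutions are square-integrable, plus $(H5)$–$(H6)$ to control the difference in $z$.

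Next I would prove the inequalities by induction. For the lower bound $Y^1_t\le y^n_t$: note $Y^1$ solves the BDSDE with driver $f_1(s,Y^1_s,Z^1_s)$ and terminal value $\xi$, while $y^n$ solves the BDSDE with driver $\widetilde f$ above and the same terminal value. To apply the comparison theorem Theorem~\ref{th2} I would compare these two drivers along $Y^1$ (or along $y^n$, matching whichever equation has a driver satisfying $(H4)$–$(H10)$): using the induction hypothesis $Y^1_t\le y^{n-1}_t$, Remark~2 gives
\[
f(s,y^{n-1}_s,z^{n-1}_s)\ge f(s,Y^1_s,Z^1_s)-C(y^{n-1}_s-Y^1_s)-\phi(|z^{n-1}_s-Z^1_s|)\ge f_1(s,Y^1_s,Z^1_s)-C(y^{n-1}_s-Y^1_s)-\phi(|z^{n-1}_s-Z^1_s|),
\]
where the last step uses $f_1\le f$ from $(H9)$. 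Evaluating $\widetilde f$ at $(Y^1_s,Z^1_s)$ and using this bound shows $\widetilde f(s,Y^1_s,Z^1_s)\ge f_1(s,Y^1_s,Z^1_s)$, which is exactly the hypothesis needed in Theorem~\ref{th2} (the "resp." case, with $f^2=\widetilde f$ satisfying $(H4)$ and $(H10)$), yielding $Y^1_t\le y^n_t$. For the upper bound $y^n_t\le Y^2_t$ the argument is symmetric, using $f\le f_2$ from $(H9)$, the induction hypothesis $y^{n-1}_t\le Y^2_t$ (for $n=1$ this is $Y^1_t\le Y^2_t$ from $(H9)$), and the sign of the extra terms $-C(y^n-y^{n-1})-\phi(\cdot)\le 0$, comparing against $Y^2$ which solves the equation with driver $f_2(s,Y^2_s,Z^2_s)$.

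For the monotonicity $y^n_t\le y^{n+1}_t$: here I would compare BDSDE (\ref{6}) at level $n$ with level $n+1$. The level-$n$ equation has driver (in $(y^n,z^n)$) $f(s,y^{n-1}_s,z^{n-1}_s)-C(y^n_s-y^{n-1}_s)-\phi(|z^n_s-z^{n-1}_s|)$ and the level-$(n+1)$ equation has driver (in $(y^{n+1},z^{n+1})$) $f(s,y^n_s,z^n_s)-C(y^{n+1}_s-y^n_s)-\phi(|z^{n+1}_s-z^n_s|)$. Evaluating the level-$(n+1)$ driver along the level-$n$ solution $(y^n_s,z^n_s)$ gives $f(s,y^n_s,z^n_s)$, which by the induction hypothesis $y^{n-1}_s\le y^n_s$ and Remark~2 is $\ge f(s,y^{n-1}_s,z^{n-1}_s)-C(y^n_s-y^{n-1}_s)-\phi(|z^n_s-z^{n-1}_s|)$, i.e.\ $\ge$ the level-$n$ driver evaluated at $(y^n_s,z^n_s)$. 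Since the level-$n$ equation (with its driver satisfying $(H4)$ and $(H10)$ — Lipschitz in $y$, uniformly continuous in $z$) plays the role of $f^1$ and the level-$(n+1)$ equation that of $f^2$ in Theorem~\ref{th2}, with equal terminal values, we conclude $y^n_t\le y^{n+1}_t$ a.s. The main obstacle I anticipate is the bookkeeping needed to keep $f(\cdot,y^{n-1}_\cdot,z^{n-1}_\cdot)$ (equivalently, the driver $\widetilde f(\cdot,0,0)$) square-integrable at every stage so that Lemma~\ref{5} and Theorem~\ref{th2} apply — this requires propagating, as part of the induction, both the $S^2\times M^2$ membership of $(y^n,z^n)$ and a square-integrability bound on $f$ along it, which in turn rests on the sandwich $Y^1_t\le y^n_t\le Y^2_t$ (giving $L^2$ control of $y^n$ uniformly), an $M^2$ estimate for $z^n$ derived via Itô's formula on $|y^n|^2$ exactly as in the proof of Theorem~\ref{t1}, and the linear growth of $f$ in $z$ coming from $(H5)$ together with a linear-growth bound in $y$ coming from $(H5)$–$(H6)$ and $(H9)$.
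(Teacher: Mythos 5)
Your proposal is correct and follows essentially the same route as the paper: induction on $n$, establishing square integrability of $f(\cdot,y^{n-1}_\cdot,z^{n-1}_\cdot)$ by sandwiching it between $f_1(\cdot,Y^1_\cdot,Z^1_\cdot)$ and $f_2(\cdot,Y^2_\cdot,Z^2_\cdot)$ up to the linear-growth correction terms $C(\cdot)+\phi(\cdot)$ via $(H5)$, $(H6)$, $(H9)$, then invoking Lemma~\ref{5} for existence/uniqueness and Theorem~\ref{th2} for each of the comparisons $Y^1\le y^n$, $y^n\le y^{n+1}$ and $y^n\le Y^2$. The bookkeeping point you flag (propagating the $M^2$ bound on the driver through the induction) is exactly how the paper's proof is organized.
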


 \begin{proof} For $n=1$, from $(H5)$, $(H6)$, $(H9)$ and $Y_{t}^{1}\leq
 Y_{t}^{2}$  it follows that
\begin{eqnarray*}
&&f_{2}(t,Y_{t}^{2},Z_{t}^{2})-f(t,Y_{t}^{1},Z_{t}^{1})\geq
 f(t,Y_{t}^{2},Z_{t}^{2})-f(t,Y_{t}^{1},Z_{t}^{1})\\
 && \geq-C(Y_{t}^{2}-Y_{t}^{1})- \phi(|
 Z_{t}^{2}-Z_{t}^{1}|).
\end{eqnarray*}
 Then,  we have
 $$f_{2}(t,Y_{t}^{2},Z_{t}^{2})+C(Y_{t}^{2}-Y_{t}^{1})+\phi(|
 Z_{t}^{2}-Z_{t}^{1}|) \geq f(t,Y_{t}^{1},Z_{t}^{1})\geq f_{1}(t,Y_{t}^{1},Z_{t}^{1}), $$
and
 $$f_{2}(t,Y_{t}^{2},Z_{t}^{2})\geq f(t,Y_{t}^{1},Z_{t}^{1})
  -C(Y_{t}^{2}-Y_{t}^{1})- \phi(| Z_{t}^{2}-Z_{t}^{1}|).$$
Thus, due to $(H9)$ and linear growth of $\phi$ we have
$f(\cdot,Y_{\cdot}^{1},Z_{\cdot}^{1})\in M^{2}(0,T;\mathbb{R})$, and
from Lemma \ref{6} it follows that BDSDE (\ref{6}) has a unique
solution $(y^{1},z^{1})\in S^{2}(0,T;\mathbb{R})\times
M^{2}(0,T;\mathbb{R}^{d})$, and by virtue of Theorem \ref{th2} we
have $Y^{1}_{t}\leq y^{1}_{t}\leq Y^{2}_{t}$,  a.s., for all
$t\in[0,T]$.

\vspace{2mm} For $n=2$, by $(H5)$, $(H6)$, $(H9)$ and
   $Y^{1}_{\cdot}\leq y^{1}_{\cdot}\leq Y^{2}_{\cdot}$   we deduce
\begin{eqnarray*}
  f_{2}(t,Y_{t}^{2},Z_{t}^{2})-f(t,y_{t}^{1},z_{t}^{1})&\geq&
 f(t,Y_{t}^{2},Z_{t}^{2})-f(t,y_{t}^{1},z_{t}^{1})\\
 &\geq& -C(Y_{t}^{2}-y_{t}^{1})- \phi(|
 Z_{t}^{2}-z_{t}^{1}|),
 \end{eqnarray*}
and
\begin{eqnarray*}
f(t,y_{t}^{1},z_{t}^{1})-f_{1}(t,Y_{t}^{1},Z_{t}^{1})&\geq&
 f(t,y_{t}^{1},z_{t}^{1})-f(t,Y_{t}^{1},Z_{t}^{1})\\
&\geq& -C(y_{t}^{1}-Y_{t}^{1})- \phi(|
 z_{t}^{1}-Z_{t}^{1}|).
 \end{eqnarray*}
 Then,   we obtain
 \begin{eqnarray*}
 \begin{aligned}
&f_{2}(t,Y_{t}^{2},Z_{t}^{2})+C(Y_{t}^{2}-y_{t}^{1})+\phi(|
 Z_{t}^{2}-z_{t}^{1}|)\\&\geq f(t,y_{t}^{1},z_{t}^{1})
 \geq f_{1}(t,Y_{t}^{1},Z_{t}^{1})-C(y_{t}^{1}-Y_{t}^{1})-\phi(|
 z_{t}^{1}-Z_{t}^{1}|),
 \end{aligned}
\end{eqnarray*}
 $$f_{2}(t,Y_{t}^{2},Z_{t}^{2})\geq f(t,y_{t}^{1},z_{t}^{1})
  -C(Y_{t}^{2}-y_{t}^{1})- \phi(| Z_{t}^{2}-z_{t}^{1}|),$$
  and
$$f(t,y_{t}^{1},z_{t}^{1})\geq f(t,Y_{t}^{1},Z_{t}^{1})
  -C(y_{t}^{1}-Y_{t}^{1})- \phi(|z_{t}^{1}-Z_{t}^{1}|).$$
Thus, $f(\cdot,y_{\cdot}^{1},z_{\cdot}^{1})\in
M^{2}(0,T;\mathbb{R})$, and by Lemma \ref{5} and  Theorem \ref{th2}
we know that BDSDE (\ref{6}) has a unique solution $(y^{2},z^{2})\in
S^{2}(0,T;\mathbb{R})\times M^{2}(0,T;\mathbb{R}^{d})$, and
$Y^{1}_{t}\leq y^{1}_{t}\leq y^{2}_{t} \leq Y^{2}_{t}$,  a.s., for
all $t\in[0,T]$.

\vspace{2mm}
 For $n>2$, we suppose that $Y^{1}\leq y^{n-1}\leq y^{n} \leq
 Y^{2}$, and $f(\cdot,y_{\cdot}^{n-1},z_{\cdot}^{n-1})\in M^{2}(0,T;\mathbb{R})$.
 Let us consider the following BDSDE:
 \begin{eqnarray}\label{11}
 \begin{aligned}
 y_{t}^{n+1}=\xi&+\int_{t}^{T}\Big[f(s,y_{s}^{n},z_{s}^{n})-C(y_{s}^{n+1}-y_{s}^{n})
 -\phi(|z_{s}^{n+1}-z_{s}^{n}|)\Big]ds  \\
&+\int_{t}^{T}g(s,y_{s}^{n+1},z_{s}^{n+1})dB_{s}-\int_{t}^{T}z_{s}^{n+1}dW_{s}
,\  t\in [0,T].
 \end{aligned}
\end{eqnarray}
Using the similar argument as $n=2$  we get
 \begin{eqnarray}\label{8}
 \begin{aligned}
 &f_{2}(t,Y_{t}^{2},Z_{t}^{2})+C(Y_{t}^{2}-y_{t}^{n})+\phi(|
 Z_{t}^{2}-z_{t}^{n}|) \geq f(t,y_{t}^{n},z_{t}^{n})\\
 & \geq f_{1}(t,Y_{t}^{1},Z_{t}^{1})-C(y_{t}^{n}-Y_{t}^{1})-\phi(|
 z_{t}^{n}-Z_{t}^{1}|),
 \end{aligned}
\end{eqnarray}
 $$f_{2}(t,Y_{t}^{2},Z_{t}^{2})\geq f(t,y_{t}^{n},z_{t}^{n})
  -C(Y_{t}^{2}-y_{t}^{n})- \phi(| Z_{t}^{2}-z_{t}^{n}|),$$
and
$$f(t,y_{t}^{n},z_{t}^{n})\geq f(t,y_{t}^{n-1},z_{t}^{n-1})
  -C(y_{t}^{n}-y_{t}^{n-1})- \phi(| z_{t}^{n}-z_{t}^{n-1}|).$$
Consequently, $f(\cdot,y_{\cdot}^{n},z_{\cdot}^{n})\in
M^{2}(0,T;\mathbb{R})$, and using  Lemma \ref{5} and  Theorem
\ref{th2} again we obtain that BDSDE (\ref{11}) has a unique
solution $(y^{n+1},z^{n+1})\in S^{2}(0,T;\mathbb{R})\times
M^{2}(0,T;\mathbb{R}^{d})$,  and $Y^{1}_{t}\leq y^{n}_{t}\leq
y^{n+1}_{t} \leq Y^{2}_{t}$,  a.s., for all $t\in[0,T]$. The proof
is complete.
\end{proof}

\vspace{2mm} We now state and prove the main result in this section.

\begin{theorem}\label{10}
Under the assumptions   $(H5),(H6)$ and $(H9)$, BDSDE  with data
$(f, g, T, \xi)$ has a solution. Moreover, if $f_{1}$ satisfies
$(H4)$ and $(H10)$, then  BDSDE with data $(f, g, T, \xi)$ has a
minimal solution $(\underline{y},\underline{z})$, in the sense that,
for any other solution $(y,z)$ of BDSDE with data $(f, g, T, \xi)$,
we have $\underline{y}\leq y$.
\end{theorem}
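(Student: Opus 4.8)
The plan is to exploit the monotone approximating sequence $\{(y^n,z^n)\}$ constructed in BDSDE~(\ref{6}), whose good properties were already secured in Lemma~\ref{7}: namely each $(y^n,z^n)\in S^2\times M^2$ exists and is unique, and $Y^1_t\le y^n_t\le y^{n+1}_t\le Y^2_t$ a.s. for all $t\in[0,T]$. First I would deduce from this monotonicity and the bound $Y^1\le y^n\le Y^2$, together with $Y^1,Y^2\in S^2$, that $\{y^n\}$ converges pointwise to some limit process $y$ and, by the dominated convergence theorem, converges in $S^2(0,T;\mathbb{R})$. The next step is to obtain a uniform estimate on $\{z^n\}$ and then to show $\{z^n\}$ is Cauchy in $M^2(0,T;\mathbb{R}^d)$; this is done exactly as in the proof of Theorem~\ref{t1}, applying It\^o's formula to $|y^n_t|^2$ and to $|y^n_t-y^m_t|^2$, using $(H1)$ on $g$ (which absorbs an $\alpha|z^n-z^m|^2$ term with $\alpha<1$), the linear growth of $\phi$ coming from $(H5)$, and the uniform $S^2$-bound on $\{y^n\}$. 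One extra point to handle compared with Theorem~\ref{t1} is the driver term $-\phi(|z^n-z^m|)$: since $\phi$ is subadditive and of linear growth, the cross term $\int_0^T (y^n_t-y^m_t)\big(\phi(|z^n_t-z^n_t|)\text{-type differences}\big)\,dt$ is controlled by $\|y^n-y^m\|_{M^2}$ times a constant plus a small fraction of $\|z^n-z^m\|_{M^2}^2$ via Young's inequality, so the Cauchy property survives. Denote by $z$ the $M^2$-limit of $\{z^n\}$.

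Having $(y^n,z^n)\to(y,z)$ in $S^2\times M^2$, the third step is to pass to the limit in each term of BDSDE~(\ref{6}). The stochastic integral $\int_t^T z^n_s\,dW_s$ converges to $\int_t^T z_s\,dW_s$ in $L^2$ uniformly in $t$ by the Burkholder--Davis--Gundy inequality; the backward It\^o integral $\int_t^T g(s,y^n_s,z^n_s)\,dB_s$ converges to $\int_t^T g(s,y_s,z_s)\,dB_s$ in the same sense, again using $(H1)$ and BDG as in Theorem~\ref{t1}. For the Lebesgue term, I would write the driver of (\ref{6}) as $f(s,y^{n-1}_s,z^{n-1}_s)-C(y^n_s-y^{n-1}_s)-\phi(|z^n_s-z^{n-1}_s|)$; the last two summands tend to $0$ in $M^2$ since $y^n-y^{n-1}\to0$ in $S^2$ and $z^n-z^{n-1}\to0$ in $M^2$ (using linear growth of $\phi$ plus dominated convergence), while $f(s,y^{n-1}_s,z^{n-1}_s)\to f(s,y_s,z_s)$ in $L^1(d\mathbb{P}\,dt)$ follows from continuity of $f$ in $(y,z)$ (a consequence of $(H5)$ and $(H6)$: $f$ is left continuous in $y$ and continuous in $z$, and along the monotone increasing sequence $y^{n-1}\uparrow y$ left continuity suffices to give the pointwise limit $f(s,y^{n-1}_s,z^{n-1}_s)\to f(s,y_s,z_s)$), combined with a domination by $C(K_s+|y_s|+|z_s|+\sup_n|y^n_s|+\sup_n|z^n_s|)$-type bound deduced from $(H9)$ and the linear growth in $(H5)$--$(H6)$ via Remark on $(H5),(H6)$. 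Passing to a subsequence where needed, the limit $(y,z)$ satisfies BDSDE with data $(f,g,T,\xi)$, which proves existence.

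For the minimality assertion, assume in addition that $f_1$ satisfies $(H4)$ and $(H10)$. Let $(y',z')$ be an arbitrary solution of BDSDE with data $(f,g,T,\xi)$. I would show by induction that $y^n_t\le y'_t$ a.s. for all $t$, which upon passing to the limit gives $\underline y:=y\le y'$. The base case uses that $y^1$ solves a BDSDE whose driver at the point $(y',z')$ is $f(t,y^0_t,z^0_t)-C(y^1_t-y^0_t)-\phi(|z^1_t-z^0_t|)$, and since $y^0=Y^1\le y'$ (from $(H9)$) one checks the driver of (\ref{6}) evaluated along $(y',z')$ is dominated by $f(t,y'_t,z'_t)$ after invoking $(H5)$--$(H6)$ (this is exactly the monotone-rearrangement inequality $f(t,a,b)-C(c-a)-\phi(|d-b|)\le f(t,c,d)$ whenever $a\le c$, which follows from the Remark combining $(H5)$ and $(H6)$); then Theorem~\ref{th2}, applicable because the driver of (\ref{6}) is Lipschitz in $y$ and uniformly continuous in $z$ hence satisfies $(H10)$, yields $y^1\le y'$. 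The inductive step is identical with $y^0$ replaced by $y^{n-1}$ and $y^1$ by $y^n$. The main obstacle is the $L^1$-convergence of the driver term in the passage to the limit: because $f$ is only \emph{uniformly continuous} in $z$ (not Lipschitz) and only \emph{left} continuous in $y$, one cannot argue by pure Lipschitz estimates and must instead carefully combine the monotonicity $y^{n-1}\uparrow y$ with left continuity in $y$, the $M^2$-convergence $z^{n-1}\to z$ (passing to an a.e.-convergent subsequence), and a uniform-integrability/domination argument built from $(H9)$ and the linear growth in $(H5)$--$(H6)$; the repeated extraction of subsequences must be organized so that the final limiting equation holds for the same subsequence in every term.
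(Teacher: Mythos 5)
Your proposal follows essentially the same route as the paper: the monotone iteration of Lemma \ref{7}, a uniform $M^{2}$ bound and Cauchy estimate for $\{z^{n}\}$ via It\^o's formula and $(H1)$, passage to the limit using left continuity of $f$ along the increasing sequence $y^{n-1}\uparrow \underline{y}$ together with the two-sided domination coming from $(H9)$, and minimality by induction with Theorem \ref{th2}. The only caveat is that the uniform bound on $\|z^{n}\|_{M^{2}}$ is not quite ``exactly as in Theorem \ref{t1}'': because the driver of (\ref{6}) involves $z^{n-1}$, the It\^o estimate is recursive, of the form $\mathbb{E}\int_{0}^{T}|z^{n}_{t}|^{2}dt\leq A+\tfrac{1}{2}\mathbb{E}\int_{0}^{T}|z^{n-1}_{t}|^{2}dt$, and must be iterated to obtain $\sup_{n}\mathbb{E}\int_{0}^{T}|z^{n}_{t}|^{2}dt<\infty$ --- a routine extra step that the paper carries out explicitly.
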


 \begin{proof} By Lemma \ref{7} we know that
 $\{y^{n}\}_{n=1}^{\infty}$ converges to a limit $\underline{y}$ in
 $S^{2}(0,T;\mathbb{R})$ and $$\sup\limits_{n}\mathbb{E}[\sup\limits_{0\leq t \leq T}| y_{t}^{n}|^{2}]
 \leq \mathbb{E}[\sup\limits_{0\leq t \leq T}| Y_{t}^{1}|^{2}]+\mathbb{E}[
 \sup\limits_{0\leq t \leq T}| Y_{t}^{2}|^{2}]<\infty.$$
Let $$f^{n}(t,y_{t}^{n},z_{t}^{n})\doteq
f(t,y_{t}^{n-1},z_{t}^{n-1})-C(y_{t}^{n}-y_{t}^{n-1})-\phi(|z_{t}^{n}-z_{t}^{n-1}|).$$
Then, from $(H5),(H6)$ and $(\ref{8})$ it follows that
 \begin{eqnarray*}
& &\mid f^{n}(t,y_{t}^{n},z_{t}^{n})| \leq |
f(t,y_{t}^{n-1},z_{t}^{n-1})\mid+C|
  y_{t}^{n}-y_{t}^{n-1}|+\phi(|z_{t}^{n}-z_{t}^{n-1}|)\\
  &\leq&\sum\limits_{i=1}^{2}\Big[| f_{i}(t,Y_{t}^{i},Z_{t}^{i})|+C|
  Y_{t}^{i}|+C|Z_{t}^{i}|\Big]\\
  &&+C\Big[| y_{t}^{n}|+|z_{t}^{n}|\Big]+3C\Big[|
  y_{t}^{n-1}|+|z_{t}^{n-1}|+1\Big].
\end{eqnarray*}
Thanks to $(H1)$ we get
\begin{eqnarray*}
|g(t,y_{t}^{n},z_{t}^{n})|^{2}&\leq&(1+\frac{1-\alpha}{2\alpha})|
g(t,y_{t}^{n},z_{t}^{n})-g(t,0,0)\mid^{2}
+(1+\frac{2\alpha}{1-\alpha})\mid g(t,0,0)\mid^{2}\\
 &\leq& \frac{1+\alpha}{2\alpha}C|y_{t}^{n}|^{2}+\frac{1+\alpha}{2}|z_{t}^{n}|^{2}
 +\frac{1+\alpha}{1-\alpha}\mid g(t,0,0)\mid^{2}.
\end{eqnarray*}
We apply It\^o's formula to $\mid y_{t}^{n}\mid^{2}$  and obtain
 \begin{eqnarray*}
\mathbb{E}\int_{0}^{T}|z_{t}^{n}|^{2}dt&=&\mathbb{E}\mid\xi\mid^{2}-\mid
y_{0}^{n}\mid^{2}
  +2\mathbb{E}\int_{0}^{T}y_{t}^{n}f^{n}(t,y_{t}^{n},z_{t}^{n})dt\\
  &&+\mathbb{E}\int_{0}^{T}\mid g(t,y_{t}^{n},z_{t}^{n})\mid^{2}dt\\
   &\leq& C_{1}+\frac{3+\alpha}{4}\mathbb{E}\int_{0}^{T}|z_{t}^{n}|^{2}dt
   +\frac{1-\alpha}{8}E\int_{0}^{T}|z_{t}^{n-1}|^{2}dt,
\end{eqnarray*}
where
 \begin{eqnarray*}
 C_{1}&\doteq &\sup\limits_{n}\mathbb{E}\Big\{2\int_{0}^{T}
 \sum\limits_{i=1}^{2} |y_{t}^{n}|\Big[|f_{i}(t,Y_{t}^{i},Z_{t}^{i})|
 +C|Y_{t}^{i}| +C|Z_{t}^{i}|\Big] dt\\
 &&+\frac{1+\alpha}{1-\alpha}\mathbb{E}\int_{0}^{T}|g(t,0,0)|^{2}dt
 +\Big(2C+\frac{1+\alpha}{2\alpha}C+\frac{76C^{2}}{1-\alpha}\Big)\int_{0}^{T}|y_{t}^{n}|^{2}dt\\
 &&+
 6C\int_{0}^{T}|y_{t}^{n}y_{t}^{n-1}|dt +6C\int_{0}^{T}|y_{t}^{n}|dt\Big\}+\mathbb{E}|\xi|^{2}<\infty.
\end{eqnarray*}
Then, we deduce
$$\mathbb{E}\int_{0}^{T}|z_{t}^{n}|^{2}dt\leq\frac{ 4C_{1}}{1-\alpha}+\frac{1}{2}E\int_{0}^{T}|z_{t}^{n-1}|^{2}dt.$$
Therefore, we get
 \begin{eqnarray*}
 \sup\limits_{n}\mathbb{E}\int_{0}^{T}|z_{t}^{n}|^{2}dt<\infty
\end{eqnarray*}
and
 \begin{eqnarray*}
   \sup\limits_{n}\mathbb{E}\int_{0}^{T}|f^{n}(t,y_{t}^{n},z_{t}^{n})|^{2}dt<\infty.
\end{eqnarray*}
Let $$C_{2}\doteq
\sup\limits_{n}\mathbb{E}\int_{0}^{T}|f^{n}(t,y_{t}^{n},z_{t}^{n})|^{2}dt.$$
Using It\^o's formula to $\mid y_{t}^{n}-y_{t}^{m}\mid^{2}$   we
obtain
\begin{eqnarray*}
 &&\mathbb{E}\int_{0}^{T}|z_{t}^{n}-z_{t}^{m}|^{2}dt+\mid
 y_{0}^{n}-y_{0}^{m}\mid^{2}\\
  &=&2\mathbb{E}\int_{0}^{T}(y_{t}^{n}-y_{t}^{m})\Big(f^{n}(t,y_{t}^{n},z_{t}^{n})-f^{m}(t,y_{t}^{m},z_{t}^{m})\Big)dt\\
  &&+\mathbb{E}\int_{0}^{T}\mid
  g(t,y_{t}^{n},z_{t}^{n})-g(t,y_{t}^{m},z_{t}^{m})\mid^{2}dt.
\end{eqnarray*}
Due to  $(H1)$  again it follows that
\begin{eqnarray*}
 &&\mathbb{E}\int_{0}^{T}|z_{t}^{n}-z_{t}^{m}|^{2}dt+\mid
 y_{0}^{n}-y_{0}^{m}\mid^{2}\\
   &\leq&
   4C_{2}^{\frac{1}{2}}\Big\{\mathbb{E}\int_{0}^{T}|y_{t}^{n}-y_{t}^{m}|^{2}dt\Big\}^{\frac{1}{2}}\\
   &&+\alpha \mathbb{E}\int_{0}^{T}|z_{t}^{n}-z_{t}^{m}|^{2}dt
   +C\mathbb{E}\int_{0}^{T}|y_{t}^{n}-y_{t}^{m}|^{2}dt.
\end{eqnarray*}
Then, we have
\begin{eqnarray*}
  &&(1-\alpha)\mathbb{E}\int_{0}^{T}|z_{t}^{n}-z_{t}^{m}|^{2}dt\\
   &&\leq 4C_{2}^{\frac{1}{2}}\Big\{\mathbb{E}\int_{0}^{T}|y_{t}^{n}-y_{t}^{m}|^{2}dt\Big\}^{\frac{1}{2}}
   +C\mathbb{E}\int_{0}^{T}|y_{t}^{n}-y_{t}^{m}|^{2}dt.
\end{eqnarray*}
Therefore, $\{z^{n}\}_{n=1}^{\infty}$ is a Cauchy sequence in
$M^{2}(0,T;\mathbb{R}^{d})$, and there exists $\underline{z}\in
M^{2}(0,T;\mathbb{R}^{d})$ such that
\begin{eqnarray*}
\lim\limits_{n\rightarrow\infty}
E\int_{0}^{T}|z_{t}^{n}-\underline{z}_{t}|^{2}dt=0.
\end{eqnarray*}
From $(H1),(H2),(H5), (H6)$, the above equality and
$\{y^{n}\}_{n=1}^{\infty}$ converges to $\underline{y}$ in
 $S^{2}(0,T;\mathbb{R})$ it follows that
\begin{eqnarray}\label{e1}
&&\sup\limits_{t\in
[0,T]}|\int_{t}^{T}z_{s}^{n}dW_{s}-\int_{t}^{T}\underline{z}_{s}dW_{s}|\stackrel{\mathbb{P}}{\longrightarrow}0,
\end{eqnarray}
\begin{eqnarray}\label{e2}
&&\sup\limits_{t\in
[0,T]}|\int_{t}^{T}g(s,y_{s}^{n},z_{s}^{n})dB_{s}
-\int_{t}^{T}g(s,\underline{y}_{s},\underline{z}_{s})dB_{s}|\stackrel{\mathbb{P}}{\longrightarrow}0,
\end{eqnarray}
and for almost all $\omega\in\Omega$, passing to a subsequence if
necessary, we have
\begin{eqnarray*}
f^{n}(t,y_{t}^{n},z_{t}^{n})
-f(t,\underline{y}_{t},\underline{z}_{t})\rightarrow0,\ dt-a.e., \
\text{as} \ n\rightarrow\infty.
\end{eqnarray*}
Combining the above inequalities with the dominated convergence
theorem yield
\begin{eqnarray}\label{e3}
\int_{0}^{T}f^{n}(s,y_{s}^{n},z_{s}^{n})ds
\rightarrow\int_{0}^{T}f(s,\underline{y}_{s},\underline{z}_{s})ds,
\end{eqnarray}
as $n\rightarrow\infty$. Consequently, (\ref{e1}), (\ref{e2}) and
(\ref{e3}) allow us to pass to the limit on both sides of BDSDE
(\ref{6}), passing to a subsequence if necessary, it follows that
 \begin{eqnarray*}
\underline{y}_{t}=\xi+\int_{t}^{T}f(s,\underline{y}_{s},\underline{z}_{s})ds
+\int_{t}^{T}g(s,\underline{y}_{s},\underline{z}_{s})dB_{s}-\int_{t}^{T}\underline{z}_{s}dW_{s},
\ t\in [0,T].
\end{eqnarray*}
Consequently, BDSDE  with data $(f, g, T, \xi)$ has a solution
$(\underline{y},\underline{z})$.

Let $(y,z)$ be any solution of BDSDE (\ref{1}). From $f_{1}(t, y, z)
\leq f(t, y, z),$ for all $ (t,y,z)\in [0,T]\times\mathbb{R}\times
\mathbb{R}^{d}$, and Theorem \ref{th2} it follows that
$Y_{t}^{1}\leq y_{t},$ a.s., for all $t\in[0,T].$

For $n=1$, we consider the following BDSDE:
 \begin{eqnarray*}
 \begin{aligned}
 y_{t}^{1}=\xi&+\int_{t}^{T}\Big[f(s,Y_{s}^{1},Z_{s}^{1})-C(y_{s}^{1}-Y_{s}^{1})
 -\phi(|z_{s}^{1}-Z_{s}^{1}|)\Big]ds  \\
&+\int_{t}^{T}g(s,y_{s}^{1},z_{s}^{1})dB_{s}-\int_{t}^{T}z_{s}^{1}dW_{s}
,\  t\in [0,T].
\end{aligned}
\end{eqnarray*}
From $(H5)$, $(H6)$ and $Y^{1}\leq y$ it follows that
 $$f(t,y_{t},z_{t})\geq f(t,Y_{t}^{1},Z_{t}^{1})
  -C(y_{t}-Y_{t}^{1})- \phi(| z_{t}-Z_{t}^{1}|).$$
Thus, by virtue of Theorem \ref{th2} we have  $y^{1}_{t}\leq y_{t}$,
a.s., for all $t\in[0,T]$.

 For $n\geq 2$, we assume that $ y^{n}\leq y $.
 Let us consider the following BDSDE:
 \begin{eqnarray*}
  \begin{aligned}
 y_{t}^{n+1}=\xi&+\int_{t}^{T}\Big[f(s,y_{s}^{n},z_{s}^{n})-C(y_{s}^{n+1}-y_{s}^{n})
 -\phi(|z_{s}^{n+1}-z_{s}^{n}|)\Big]ds \\
&+\int_{t}^{T}g(s,y_{s}^{n+1},z_{s}^{n+1})dB_{s}-\int_{t}^{T}z_{s}^{n+1}dW_{s}
,\  \ t\in [0,T].
 \end{aligned}
\end{eqnarray*}
By the similar argument as $n=1$ we get
$$f(t,y_{t},z_{t})\geq f(t,y_{t}^{n},z_{t}^{n})
  -C(y_{t}-y_{t}^{n})- \phi(| z_{t}-z_{t}^{n}|).$$
Therefore,  using Theorem \ref{th2} again  we obtain $ y^{n+1}_{t}
\leq y_{t}$, a.s., for  all $t\in[0,T]$. Then, by virtue of the
first part of the proof and taking the limit we have  $
\underline{y}_{t} \leq y_{t}$ a.s., for all $t\in[0,T]$. The proof
is complete.
\end{proof}

\begin{remark}
The above theorem generalizes the result in Lin \cite{LP2008}. In
fact, we can take $\phi(x)=Cx, x\geq 0,$ where $C$ is a positive
constant.
\end{remark}

 \begin{remark}\label{12}
 Under the assumptions of Theorem \ref{10}, if $f_{2}$ satisfies $(H4)$ and $(H10)$,
 and  BDSDE (\ref{6}) is replaced by the following BDSDE:
\begin{eqnarray*}
 \begin{aligned}
 y_{t}^{n}=\xi&+\int_{t}^{T}[f(s,y_{s}^{n-1},z_{s}^{n-1})-C(y_{s}^{n}-y_{s}^{n-1})+\phi(|z_{s}^{n}-z_{s}^{n-1}|)]ds \\
&+\int_{t}^{T}g(s,y_{s}^{n},z_{s}^{n})dB_{s}-\int_{t}^{T}z_{s}^{n}dW_{s}
,\ \ t\in [0,T],
 \end{aligned}
\end{eqnarray*}
where $n=1,2,\cdots,$ and $(y^{0},z^{0})=(Y^{2},Z^{2})$.
 Similar to the proof of Lemma  \ref{7} and Theorem
 \ref{10}, we can prove that BDSDE with data $(f, g,
T, \xi)$ has the maximal solution.
\end{remark}

\begin{remark}  Under assumptions $(H5),(H6)$ and $(H9)$, the
 solution of BDSDE  with data $(f, g, T, \xi)$ may be non-unique.
 Let us consider the following BDSDE:
\begin{eqnarray*}
y_{t}=\int_{t}^{T}\Big[4s Sgn(y_{s})\sqrt{|
y_{s}|}+\sqrt{z_{s}1_{z_{s}\geq
0}}\Big]ds+\int_{t}^{T}\Big[1_{\{y_{s}<0\}}y_{s}
+\frac{1}{2}z_{s}\Big]dB_{s} -\int_{t}^{T}z_{s}dW_{s}, \ t\in[0,T],
\end{eqnarray*}
where $Sgn(x)= 1,  x\geq 0; \ Sgn(x)= -1,x< 0.$
 We can check that the above equation satisfies $(H5),(H6)$ and
$(H9)$, where$$f_{1}(t, y,z)=-2t^{2}-2| y|+z  \ \text{and} \
f_{2}=2t^{2}+2| y|+z.$$ It's easy to check that,  for each $c\in
[0,T]$ and $t\in [0,T]$, $(y_{t},z_{t})=(0,0)$ and
$(y_{t},z_{t})=\Big([max\{c^{2}-t^{2},0\}]^{2},0 \Big)$ are
solutions of the above BDSDE.
\end{remark}

Finally, we give a comparison theorem for BDSDEs with discontinuous
coefficients.

\begin{theorem}\label{9}
 We suppose that $f^{1}$ and $f^{2}$ satisfy  $(H5),(H6)$ and $(H9)$, and
$f_{1}$ satisfies $(H4)$ and $(H10)$. Let  the minimal solutions
$(\underline{y}^{1},\underline{z}^{1})$ and
$(\underline{y}^{2},\underline{z}^{2})$ of BDSDEs (\ref{1}) with
data $(f^{1},g,T,\xi^{1})$ and $(f^{2},g,T,\xi^{2})$, respectively.
If $\xi^{1}\leq \xi^{2}$, $a.s.$, and $f^{1}(t, y, z) \leq f^{2}(t,
y, z)$, $a.s.$, then we have $\underline{y}_{t}^{1} \leq
\underline{y}_{t}^{2}$, $ a.s.$, for all $\ t \in [0,T]$.
\end{theorem}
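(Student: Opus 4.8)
The plan is to run the same Picard-type iteration used in Lemma \ref{7} and Theorem \ref{10} for both data $(f^{1},g,T,\xi^{1})$ and $(f^{2},g,T,\xi^{2})$, and to compare the two iterations level by level. Concretely, let $(y^{n,1},z^{n,1})$ and $(y^{n,2},z^{n,2})$ be the sequences constructed via BDSDE (\ref{6}) for $f^{1}$ and $f^{2}$ respectively, starting from the lower solutions associated with $(H9)$. By Theorem \ref{10} (and its proof) we know $\{y^{n,1}\}$ converges in $S^{2}(0,T;\mathbb{R})$ to the minimal solution $\underline{y}^{1}$, and similarly $\{y^{n,2}\}$ converges to $\underline{y}^{2}$. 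So it suffices to show $y^{n,1}_{t}\le y^{n,2}_{t}$, $a.s.$, for every $n$ and every $t\in[0,T]$, and then pass to the limit.

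For the base step I would first need a comparison between the two starting lower processes. Here is where the hypothesis that $f_{1}$ (the lower dominating coefficient in $(H9)$ for $f^{1}$) satisfies $(H4)$ and $(H10)$ is used: the minimal solution of BDSDE with data $(f^{1}_{1},g,T,\xi^{1})$ — call it $(Y^{1,1},Z^{1,1})$ — can be compared, via Theorem \ref{th2}, to any solution of the $f^{2}$-equation, because $f^{1}_{1}$ satisfies $(H4)$ and $(H10)$, $\xi^{1}\le\xi^{2}$, and $f^{1}_{1}(t,y,z)\le f^{1}(t,y,z)\le f^{2}(t,y,z)$ pointwise, while $f^{2}(t,Y^{1,1}_{t},Z^{1,1}_{t})\ge f^{1}_{1}(t,Y^{1,1}_{t},Z^{1,1}_{t})$; this gives $Y^{1,1}_{t}\le \underline{y}^{2}_{t}$, $a.s.$ Then, arguing inductively exactly as in the proof of Lemma \ref{7}, at each level $n$ the driver of the $f^{1}$-iterate, namely $f^{1}(s,y^{n-1,1}_{s},z^{n-1,1}_{s})-C(y^{n,1}_{s}-y^{n-1,1}_{s})-\phi(|z^{n,1}_{s}-z^{n-1,1}_{s}|)$, is dominated along the path $(y^{n-1,2},z^{n-1,2})$ by the driver of the $f^{2}$-iterate: using $y^{n-1,1}\le y^{n-1,2}$ (induction hypothesis), $(H5)$, $(H6)$ and $f^{1}\le f^{2}$, one gets
$$f^{1}(t,y^{n-1,1}_{t},z^{n-1,1}_{t})\ge f^{2}(t,y^{n-1,2}_{t},z^{n-1,2}_{t})-C(y^{n-1,2}_{t}-y^{n-1,1}_{t})-\phi(|z^{n-1,2}_{t}-z^{n-1,1}_{t}|)$$
is what is needed in the wrong direction, so instead one checks the inequality in the form required by Theorem \ref{th2}: the $n$-th $f^{2}$-driver evaluated at $(y^{n,1},z^{n,1})$ dominates the $n$-th $f^{1}$-driver evaluated at $(y^{n,1},z^{n,1})$. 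Since the $n$-th $f^{1}$-driver as a function of $(y,z)$ is Lipschitz in $y$ and $\phi$-uniformly continuous in $z$, i.e. satisfies $(H10)$, and is in $M^{2}$ at the relevant path (shown in Lemma \ref{7}), Theorem \ref{th2} applies and yields $y^{n,1}_{t}\le y^{n,2}_{t}$, $a.s.$, for all $t\in[0,T]$.

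The main obstacle is the bookkeeping of which coefficient plays the "good'' role ($(H4)+(H10)$) in each invocation of Theorem \ref{th2}: the iterated drivers $f^{1}(s,y^{n-1,1},z^{n-1,1})-C(\cdot)-\phi(|\cdot|)$ automatically inherit $(H10)$ in their free variables $(y^{n,1},z^{n,1})$ regardless of whether $f^{1}$ itself is Lipschitz in $y$, so the only place the extra hypothesis ``$f_{1}$ satisfies $(H4)$ and $(H10)$'' is genuinely needed is to start the induction (comparing the two lower solutions of $(H9)$) and to ensure $\underline{y}^{1}$ is actually the minimal solution via Theorem \ref{10}. Once the level-$n$ inequality $y^{n,1}\le y^{n,2}$ is established for all $n$, taking $n\to\infty$ in $S^{2}(0,T;\mathbb{R})$ (convergence of both sequences is guaranteed by Theorem \ref{10}) gives $\underline{y}^{1}_{t}\le \underline{y}^{2}_{t}$, $a.s.$, for all $t\in[0,T]$, which completes the proof.
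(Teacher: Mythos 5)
There is a genuine gap at the heart of your inductive step. You propose to compare the two iteration schemes level by level, i.e.\ to prove $y^{n,1}\le y^{n,2}$ by applying Theorem \ref{th2} to the two penalized drivers $F^{n,i}(t,y,z)=f^{i}(t,y^{n-1,i}_{t},z^{n-1,i}_{t})-C(y-y^{n-1,i}_{t})-\phi(|z-z^{n-1,i}_{t}|)$, $i=1,2$. Whichever branch of Theorem \ref{th2} you invoke, you must verify $F^{n,1}(t,y,z)\le F^{n,2}(t,y,z)$ along one of the two solution paths. But
$$F^{n,2}(t,y,z)-F^{n,1}(t,y,z)=\Big[f^{2}(t,y^{n-1,2}_{t},z^{n-1,2}_{t})-f^{1}(t,y^{n-1,1}_{t},z^{n-1,1}_{t})+C\big(y^{n-1,2}_{t}-y^{n-1,1}_{t}\big)\Big]+\phi\big(|z-z^{n-1,1}_{t}|\big)-\phi\big(|z-z^{n-1,2}_{t}|\big),$$
and the best lower bound available for the bracket (from $(H5)$, $(H6)$, $f^{1}\le f^{2}$ and the induction hypothesis $y^{n-1,1}\le y^{n-1,2}$) is $-\phi(|z^{n-1,2}_{t}-z^{n-1,1}_{t}|)$, while sub-additivity of $\phi$ bounds the remaining difference below only by $-\phi(|z^{n-1,1}_{t}-z^{n-1,2}_{t}|)$. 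The total lower bound is $-2\phi(|z^{n-1,1}_{t}-z^{n-1,2}_{t}|)\le 0$, so the needed sign cannot be concluded; indeed your own attempt to rescue the step by evaluating at $(y^{n,1},z^{n,1})$ would require $\phi(|z^{n,1}-z^{n-1,2}|)+\phi(|z^{n-1,2}-z^{n-1,1}|)\le\phi(|z^{n,1}-z^{n-1,1}|)$, which is the reverse of sub-additivity and false in general. The obstruction is structural: the two penalization terms are centered at different $z$-paths, and the induction hypothesis controls only the $y$-components. A secondary problem is the base case: a level-by-level induction needs $y^{0,1}\le y^{0,2}$, i.e.\ a comparison of the two lower bounding solutions coming from two a priori unrelated instances of $(H9)$, which is not among the hypotheses.

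The paper avoids all of this by never comparing iterate with iterate: it runs the penalized iteration only for $f^{1}$ (started from $(Y^{1},Z^{1})$ of $(H9)$) and compares each iterate $y^{n}$ directly with the fixed minimal solution $\underline{y}^{2}$, whose driver is $f^{2}$ itself with no penalization term. Then Theorem \ref{th2} only requires $f^{1}(t,y^{n-1}_{t},z^{n-1}_{t})-C(\underline{y}^{2}_{t}-y^{n-1}_{t})-\phi(|\underline{z}^{2}_{t}-z^{n-1}_{t}|)\le f^{2}(t,\underline{y}^{2}_{t},\underline{z}^{2}_{t})$, which follows immediately from $f^{1}\le f^{2}$, $(H5)$, $(H6)$ and the induction hypothesis $y^{n-1}\le\underline{y}^{2}$, because there is only one $\phi$-term and it has the right sign. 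Your base-case comparison $Y^{1}\le\underline{y}^{2}$ via the $(H10)$ coefficient $f_{1}$ is correct and is exactly the paper's first step; replacing your level-by-level comparison by the comparison against $\underline{y}^{2}$ repairs the argument.
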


\begin{proof}
From  $(H9)$ we know that there exists the following BDSDE:
\begin{eqnarray*}
Y_{t}^{1}=\xi^{1}+\int_{t}^{T}f_{1}(s,Y_{s}^{1},Z_{s}^{1})ds
+\int_{t}^{T}g(s,Y_{s}^{1},Z_{s}^{1})dB_{s}-\int_{t}^{T}Z_{s}^{1}dW_{s}
,\   t\in [0,T],
\end{eqnarray*}
such that $f_{1}(t,Y_{t}^{1},Z_{t}^{1})\in M^{2}(0,T;\mathbb{R})$.

  We consider  a sequence of BDSDEs as follows:
\begin{eqnarray*}
\begin{aligned}
 y_{t}^{n}=\xi&+\int_{t}^{T}\Big[f^{1}(s,y_{s}^{n-1},z_{s}^{n-1})-C(y_{s}^{n}-y_{s}^{n-1})
 -\phi(|z_{s}^{n}-z_{s}^{n-1}|)\Big]ds \\
&+\int_{t}^{T}g(s,y_{s}^{n},z_{s}^{n})dB_{s}-\int_{t}^{T}z_{s}^{n}dW_{s}
,\   \ t\in [0,T],
\end{aligned}
\end{eqnarray*}
where $n=1,2,\cdots$ and $(y^{0},z^{0})=(Y^{1},Z^{1}).$

From $(H9)$ and  $f^{1}(t, y, z) \leq f^{2}(t, y, z)$, $a.s.$, we
know that  $f_{1}(t, y, z) \leq f^{2}(t, y, z)$, $a.s.$ Then, from
Theorem \ref{th2} it follows that  $Y_{t}^{1}\leq
\underline{y}_{t}^{2}$, $a.s.$, for all $t\in [0,T]$.

 For $n=1$, by virtue of
$(H5)$, $(H6)$ and $Y^{1}\leq
 \underline{y}^{2}$ we get
 $$f^{2}(t,\underline{y}_{t}^{2},\underline{z}_{t}^{2})-f^{1}(t,Y_{t}^{1},Z_{t}^{1})\geq
 f^{1}(t,\underline{y}_{t}^{2},\underline{z}_{t}^{2})-f^{1}(t,Y_{t}^{1},Z_{t}^{1})
 \geq -C(\underline{y}_{t}^{2}-Y_{t}^{1})- \phi(|
 \underline{z}_{t}^{2}-Z_{t}^{1}|).$$
 Then, we have
 $$f^{2}(t,\underline{y}_{t}^{2},\underline{z}_{t}^{2})
 \geq f^{1}(t,Y_{t}^{1},Z_{t}^{1}) -C(\underline{y}_{t}^{2}-Y_{t}^{1})- \phi(|
 \underline{z}_{t}^{2}-Z_{t}^{1}|).$$
Thus, by virtue of Theorem \ref{th2} we have $y^{1}_{t}\leq
\underline{y}_{t}^{2}$, a.s., for all $t\in[0,T]$.

 For $n\geq2$, we suppose that $ y^{n}\leq \underline{y}^{2}$.
 Then, let us consider the following BDSDE:
 \begin{eqnarray*}
 \begin{aligned}
 y_{t}^{n+1}=\xi&+\int_{t}^{T}\Big[f^{1}(s,y_{s}^{n},z_{s}^{n})-C(y_{s}^{n+1}-y_{s}^{n})
 -\phi(|z_{s}^{n+1}-z_{s}^{n}|)\Big]ds  \\
&+\int_{t}^{T}g(s,y_{s}^{n+1},z_{s}^{n+1})dB_{s}-\int_{t}^{T}z_{s}^{n+1}dW_{s}
,\ t\in [0,T].
\end{aligned}
\end{eqnarray*}
By virtue of the similar argument as $n=1$  we have
  $$f^{2}(t,\underline{y}_{t}^{2},\underline{z}_{t}^{2})
 \geq f^{1}(t,y_{t}^{n},z_{t}^{n}) -C(\underline{y}_{t}^{2}-y_{t}^{n})- \phi(|
 \underline{z}_{t}^{2}-z_{t}^{n}|).$$
Then, thanks to Theorem \ref{th2} we get $y^{n+1}_{t}\leq
\underline{y}_{t}^{2}$, a.s., for all $t\in[0,T]$. From the proof of
Theorem \ref{10}  it follows that $\underline{y}_{t}^{1}\leq
\underline{y}_{t}^{2}$, a.s., for all $t\in[0,T]$. The proof is
complete.
\end{proof}

\begin{remark} Similar to the proof of Theorem
 \ref{9}, we can prove that a comparison theorem for the maximal solution of BDSDE
 with data $(f,g, T,\xi)$  by using Remark \ref{12}.
 \end{remark}

\end{document}